\documentclass[a4paper,twoside,10pt]{article}
\linespread{1}                     
\usepackage{amsthm}
\newtheorem{theorem}{Theorem}[section]

\newtheorem{lemma}{Lemma}[theorem]
\newtheorem{definition}{Definition}

\usepackage{amssymb}                 
\usepackage{amsmath}

\usepackage{latexsym}                 

\usepackage{verbatim}                
\usepackage{graphicx}
\usepackage{epstopdf}
\usepackage{epsfig}              
\usepackage{color}                  
\usepackage[normal]{caption2}
\usepackage[utf8]{inputenc}
\usepackage[english]{babel}
\usepackage{multicol}
\setlength{\columnsep}{0.8cm}
\usepackage{makeidx}
\usepackage[normal]{subfigure}
\usepackage{url}
\usepackage{hyperref}
\usepackage{mathtools}
\usepackage{plain}
\usepackage[utf8]{inputenc} 
\usepackage{mathtools}
\textheight23cm
\textwidth16.3cm
\oddsidemargin-0.1cm
\evensidemargin0.1cm
\headsep.3cm
\usepackage{fancyhdr}
\pagestyle{fancy}
\fancyhf{}
\fancyhead[OC]{Journal of Nepal Mathematical Society , Vol. 7, Issue 2 (2024); P. Pant, H.L. Dhungana,S. Rokaya }
\fancyhead[EC]{An Analysis of the Generalized Gaussian Integrals and Gaussian Like Integrals of Type I and II}
\fancyfoot[C]{\thepage}%

\begin{document}
{\Large
\begin{center}
\bf{\LARGE An Analysis of the Generalized Gaussian Integrals and Gaussian Like Integrals of Type I and II}
\end{center}}
\begin{center}
Prakash Pant  $^{1}  $, Hem Lal Dhungana  $^{2}  $, Sudip Rokaya  $^{3}  $
\end{center}
\begin{center}
{\footnotesize
  $^{1}  $ The University of Vermont, Burlington, Vermont, USA \\
  $^{2}  $ Mid-West University, Surkhet, Nepal\\
  $^{3}  $ Massachusetts Institute of Technology, Cambridge, Massachusetts, USA
\\[1mm]
Correspondence to: Hem Lal Dhungana, Email: hem.dhungana@mu.edu.np
}
\end{center}
\noindent
\textbf{Abstract:} The Gaussian integral, denoted as \( \int_{-\infty}^{\infty} e^{-x^2} dx \), plays a significant role in mathematical literature. In this paper, we explore a family of integrals related to Gaussian functions. Specifically, we introduce generalized Gaussian integrals, represented as \( \int_{0}^{\infty} e^{-x^n} dx \), and two distinct types of Gaussian-like integrals: 

1. Type I: \( \int_{0}^{\infty} e^{-f(x)^2} dx \), and  
2. Type II: \( \int_{0}^{\infty} e^{-x^2} f(x) dx \),  \\
\noindent
where \( f(x) \) is a continuous function. The study of integrals related to Gaussian-like functions has been explored in the work  of Huang\cite{H2023} and Dominy \cite{Dnd}. Our approach to evaluating these integrals relies on specialized functions, including error functions, complementary error functions, imaginary error functions, and Basel functions.\\
All the authors contributed equally to deriving the results and preparing the manuscript.\\\\
2020\textit{ Mathematics Subject Classification :} 45R05 \\
\textbf{Keywords:} Euler-Mascheroni constant, Laurent series, Error functions, Fubini's Theorem
\\ \\

\section{Introduction}
The Gaussian integral, which is central to various fields of mathematical analysis, probability theory, and physical sciences, finds its historical roots in the work of Carl Friedrich Gauss. In \textit{Theoria Motus Corporum Coelestium in Sectionibus Conicis Solem Ambientium} (1809), Gauss introduced the least squares method and the concept of a normal distribution, setting the foundation for the Gaussian function's later applications in probability and statistics \cite{G1809}. Gauss extended his analysis of observational errors in Theoria Combinationis Observationum Erroribus Minimis Obnoxiae (1823), where he mathematically derived a model of error distribution in empirical data \cite{G1823}. The role of standard normal distribution  function \[ f(x)=\frac{1}{\sigma\sqrt{2\pi}} e^{-\frac{(x-\mu)^2}{2\sigma^2}} dx \] in probability and statistics can be found in the work of Laplace \cite{L1812} and Stiger \cite{S1986}. The normal distribution has been known to millions as a bell curve, bell-shaped curve or Gaussian distribution. It is not at all obvious at first that integral of the standard normal distribution function over real numbers is 1. Proofs of the integral of similar function   $\int_{-\infty}^{\infty} e^{-x^2} dx = \sqrt{\pi}   $ using techniques like polar transformations, differentiation under integral sign, gamma function can be found in the paper of Cornard \cite{C2016}. Solutions of Generalised Gaussian Integral of type II using dimensional analysis can be found in the work of Dominy \cite{Dnd}. Huang also gave several interesting proofs of various Gaussian Like Integrals of type-II using real and complex methods \cite{H2023} \footnote{Huang's paper motivated us to evaluate a slightly different version of Gaussian Integrals.}. Even though a lot of Type II generalizations of the form \( \int_0^{\infty} e^{-x^2} f(x) dx \) have been evaluated  in the mathematical literature, there is inadequacy of Type I generalization \( \int_0^{\infty} e^{-f(x)^2} dx \) for various functions \( f(x) \) . This study aims to fill the gap in evaluation of Gaussian Like Integrals of Type-II.  These evaluations are indispensable in numerous fields, as Gaussian integrals are fundamental in stochastic methods \cite{G2004}, Machine Learning \cite{R2006}, and in statistical mechanics for partition functions\cite{P2011}. The importance of Gaussian / Normal distribution in  hypothesis testing, estimation, and the central limit theorem, solidifying its role in both theoretical and applied statistics is also examined in the paper of R.A. Fisher\cite{F1922}.
 
 In this study of Gaussian Like Integrals, error function   $erf(x)= \frac{2}{\sqrt{\pi}}\int_0^x e^{-t^2} dt   $ and its counterparts, namely complementary error function   $erfc(x)=\frac{2}{\sqrt{\pi}} \int_x^{\infty} e^{-t^2} dt  $ and imaginary error function   $erfi(x)=\frac{2}{\sqrt{\pi}} \int_0^x e^{t^2} dt  $ have been carefully manipulated to present various fruitful results. 
 We organize the work into different sections. In section 2, we evaluate Generalized Gaussian Integrals. Several Gaussian Like Integrals of type-I and type-II have been evaluated in section 3 and section 4 respectively. In section 5, we highlight a series of miscellaneous integrals. As an additional check, all the formulas have been verified from Wolfram Alpha.

 \section{Generalized Gaussian Integral}
This section contains a number of generalizations associated with the well known Gaussian integral. We present our main results in terms of error functions \cite{AS1972} and Euler-Mascheroni constant \cite{Wei2002}. First, we mention some definitions, lemmas, and then we construct our main results.

\begin{definition}
From \cite{SG2002}, we define   $\Gamma(x)  $ for   $x>0  $ as following:
\begin{equation}\label{Gamma}
 \Gamma(x) = \int_0^{\infty} e^{-t} t^{x-1} dt 
\end{equation}
\end{definition}
Using integration by parts, we see that for positive integer $x$,   $\Gamma(x)=(x-1)!  $

\begin{lemma}
    For   $\Re(z)>0  $ , the Laurent series expansion of Gamma function is 
\begin{equation}\label{gammaseries}
     \Gamma(z)=\frac{1}{z}-\gamma +\frac{1}{2}(\gamma^2+\frac{\pi^2}{6})z-\frac{1}{6}(\gamma^3+\frac{\gamma \pi^2}{2}+2\zeta(3) )z^2+O(z^3)  
\end{equation}
where   $\gamma  $ represents the Euler-Mascheroni constant.
\end{lemma}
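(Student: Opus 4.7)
The plan is to obtain the expansion from the Weierstrass product representation of $\Gamma$, since this is the cleanest source of the Euler--Mascheroni constant and the zeta values that appear in the claimed coefficients. Starting from
\[
\frac{1}{\Gamma(z)} = z\, e^{\gamma z} \prod_{n=1}^{\infty} \left(1 + \frac{z}{n}\right) e^{-z/n},
\]
I would take logarithms on a small punctured neighbourhood of the origin (choosing the principal branch, which is legitimate since the product converges absolutely and uniformly on compact subsets avoiding the non-positive integers) to get
\[
\log\Gamma(z) = -\log z - \gamma z - \sum_{n=1}^{\infty}\left[\log\!\left(1+\frac{z}{n}\right) - \frac{z}{n}\right].
\]

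Next I would expand each summand as $\log(1+z/n) - z/n = -\tfrac{z^2}{2n^2} + \tfrac{z^3}{3n^3} - \tfrac{z^4}{4n^4} + \cdots$, interchange the two summations (justified by absolute convergence for $|z|<1$), and recognise the inner sums as values of the Riemann zeta function. This gives
\[
\log\Gamma(z) = -\log z - \gamma z + \frac{\zeta(2)}{2}z^2 - \frac{\zeta(3)}{3}z^3 + O(z^4).
\]
Writing $A(z) := -\gamma z + \tfrac{\zeta(2)}{2}z^2 - \tfrac{\zeta(3)}{3}z^3 + O(z^4)$, we have $\Gamma(z) = z^{-1}\exp(A(z))$, and the final step is to expand the exponential through order $z^3$ using $\exp(A) = 1 + A + \tfrac{1}{2}A^2 + \tfrac{1}{6}A^3 + O(A^4)$, collect like powers, and substitute $\zeta(2) = \pi^2/6$. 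The $z^0$ coefficient of $\exp(A(z))$ becomes $-\gamma$, the $z^1$ coefficient becomes $\tfrac{1}{2}(\gamma^2 + \pi^2/6)$ after combining $\tfrac{\zeta(2)}{2}$ with $\tfrac{1}{2}(-\gamma)^2$, and the $z^2$ coefficient becomes $-\tfrac{1}{6}(\gamma^3 + \tfrac{\gamma\pi^2}{2} + 2\zeta(3))$ after combining contributions from $A$, $\tfrac{1}{2}A^2$, and $\tfrac{1}{6}A^3$. Dividing by $z$ yields the stated Laurent expansion.

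The only step requiring genuine care is the bookkeeping in the exponential expansion, since each of the three target coefficients receives contributions from multiple powers of $A(z)$; a single algebraic slip will produce wrong coefficients that still look plausible. I would carry this step out by collecting terms degree-by-degree in a small table to minimise the risk. Everything else (convergence of the Weierstrass product, term-by-term differentiability, and interchange of sums) is standard and can be invoked with a citation rather than reproved.
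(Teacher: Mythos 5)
Your proposal is correct, and the coefficient bookkeeping checks out: the $z^2$ coefficient of $\exp(A(z))$ collects $\tfrac{\zeta(2)}{2}$ from $A$ and $\tfrac{\gamma^2}{2}$ from $\tfrac{1}{2}A^2$, giving $\tfrac{1}{2}(\gamma^2+\tfrac{\pi^2}{6})$, and the $z^3$ coefficient collects $-\tfrac{\zeta(3)}{3}$ from $A$, $-\tfrac{\gamma\zeta(2)}{2}$ from $\tfrac{1}{2}A^2$, and $-\tfrac{\gamma^3}{6}$ from $\tfrac{1}{6}A^3$, which combine to $-\tfrac{1}{6}(\gamma^3+\tfrac{\gamma\pi^2}{2}+2\zeta(3))$ since $3\zeta(2)=\tfrac{\pi^2}{2}$. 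Note that the paper does not prove this lemma at all --- it simply cites an external source (a Math Stack Exchange post) --- so your argument is strictly more than what the paper provides; the Weierstrass-product route you take is the standard derivation and is essentially the one used in that cited source, so there is no substantive methodological divergence to report.
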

A proof for this series can be found on \cite{M2016}.

\begin{lemma}
For   $\Re(z)>0  $, as   $n \in \mathbb{N}  $ tends to infinity, we have:
\begin{equation}\label{approx}
 \Gamma \left(\frac{1}{n}\right) \sim n - \gamma   
\end{equation}
where   $\gamma  $ is Euler-Mascheroni constant \cite{Wei2002} and   $\sim  $ denotes asymptotic equivalence.
\end{lemma}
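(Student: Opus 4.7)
The plan is to deduce this asymptotic statement directly from the Laurent series for $\Gamma(z)$ recorded in the previous lemma, by simply substituting $z = 1/n$. Since the only singularity of $\Gamma$ in the disk $|z| < 1$ is the simple pole at $z=0$, the expansion \eqref{gammaseries} converges for all $z$ with $0 < |z| < 1$, and in particular at $z = 1/n$ for every integer $n \ge 2$.

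First I would write
\begin{equation*}
\Gamma\!\left(\tfrac{1}{n}\right) \;=\; n \;-\; \gamma \;+\; \frac{1}{2}\!\left(\gamma^{2}+\frac{\pi^{2}}{6}\right)\!\frac{1}{n} \;-\; \frac{1}{6}\!\left(\gamma^{3}+\frac{\gamma\pi^{2}}{2}+2\zeta(3)\right)\!\frac{1}{n^{2}} \;+\; O\!\left(\tfrac{1}{n^{3}}\right),
\end{equation*}
which just quotes \eqref{gammaseries} with $z=1/n$. Then to verify the asymptotic equivalence $\Gamma(1/n)\sim n-\gamma$ I would form the ratio
\begin{equation*}
\frac{\Gamma(1/n)}{n-\gamma} \;=\; 1 \;+\; \frac{1}{n-\gamma}\!\left[\frac{1}{2}\!\left(\gamma^{2}+\frac{\pi^{2}}{6}\right)\!\frac{1}{n} + O\!\left(\tfrac{1}{n^{2}}\right)\right],
\end{equation*}
and observe that the bracketed term is $O(1/n)$ while the prefactor $1/(n-\gamma)$ is itself $O(1/n)$, so the entire correction is $O(1/n^{2})$ and tends to $0$ as $n\to\infty$. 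This is precisely the content of $\Gamma(1/n)\sim n-\gamma$.

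There is not much of an obstacle here; the only point worth being careful about is justifying the substitution $z=1/n$, which amounts to noting that $1/n$ lies inside the annulus of convergence of the Laurent expansion and that $1/n \to 0^{+}$ within the right half-plane $\Re(z)>0$ where the series in the preceding lemma is stated. Once that is in place, the result is an immediate consequence of keeping the first two terms of the expansion and discarding the rest as a vanishing remainder.
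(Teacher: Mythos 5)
Your proposal is correct and follows exactly the paper's route: the paper's own proof is the one-line ``substitute $z = 1/n$ into the Laurent expansion and let $n \to \infty$'' (followed by a numerical sanity check), and you carry out the same substitution, merely adding the explicit ratio computation and the remark about the annulus of convergence that the paper leaves implicit. No substantive difference in method.
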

\begin{proof} Substituting $z =  \frac{1}{n}  $ and letting $n$ tend to infinity, we get the result.   $\,\,    $ \\
To verify this, using wolfram alpha, we get \\
   $\Gamma(\frac{1}{3}) \approx 2.7689 \,\, ,  $
  $\Gamma(\frac{1}{4}) \approx 3.6256 \,\, ,  $
  $\Gamma(\frac{1}{5}) \approx 4.5908 \,\, ,  $ \\
These approximations are reasonably close to the values obtained using \eqref{approx} with   $\gamma=0.5772  $. The accuracy of the approximation improves as $n$ becomes larger.
\end{proof}

\begin{theorem}
For   $n>0  $,
\begin{equation}\label{gen}
    \int_0^{\infty} e^{-x^n} dx = \frac{1}{n}\Gamma(\frac{1}{n}) \sim (1-\frac{\gamma}{n}) 
\end{equation}
\end{theorem}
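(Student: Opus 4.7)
The plan is to reduce the integral to the Gamma function via a standard monomial substitution, and then apply Lemma \ref{approx} to obtain the asymptotic form.

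First I would set $u = x^n$, so that $du = n x^{n-1}\,dx$ and $dx = \tfrac{1}{n}\, u^{1/n - 1}\, du$. Since $n > 0$, the map $x \mapsto x^n$ is a smooth increasing bijection from $(0,\infty)$ onto itself, with $u \to 0$ as $x \to 0$ and $u \to \infty$ as $x \to \infty$, so the limits of integration are preserved. Substituting into the integral yields
\begin{equation*}
\int_0^{\infty} e^{-x^n}\, dx = \int_0^{\infty} e^{-u} \cdot \frac{1}{n}\, u^{\frac{1}{n} - 1}\, du = \frac{1}{n} \int_0^{\infty} e^{-u}\, u^{\frac{1}{n} - 1}\, du.
\end{equation*}
Recognizing the last integral as $\Gamma(1/n)$ by Definition \ref{Gamma} (which is valid since $1/n > 0$), we obtain the identity $\int_0^{\infty} e^{-x^n}\, dx = \tfrac{1}{n}\Gamma(1/n)$.

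For the asymptotic part, I would invoke Lemma \ref{approx}, which gives $\Gamma(1/n) \sim n - \gamma$ as $n \to \infty$. Dividing by $n$ yields
\begin{equation*}
\frac{1}{n}\Gamma\!\left(\frac{1}{n}\right) \sim \frac{n - \gamma}{n} = 1 - \frac{\gamma}{n},
\end{equation*}
which establishes the asymptotic relation.

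There is no real obstacle here; the only step requiring care is ensuring the substitution is valid for all $n > 0$ (not just integer $n$) and confirming that the resulting exponent $\tfrac{1}{n} - 1 > -1$ so that the Gamma integral converges at $0$, which holds automatically since $1/n > 0$. The convergence at infinity is immediate from the factor $e^{-u}$.
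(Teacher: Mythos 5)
Your proposal is correct and follows essentially the same route as the paper: the substitution $u = x^n$ reducing the integral to $\tfrac{1}{n}\Gamma(1/n)$ via Definition \eqref{Gamma}, followed by the asymptotic Lemma \eqref{approx} for the $1-\gamma/n$ form. Your added remarks on the validity of the substitution for all $n>0$ and on convergence of the Gamma integral are a welcome bit of extra care, but the argument is the same one the paper gives.
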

\begin{proof} Making an u-substitution $u = x^n  $,
\[\int_0^{\infty} e^{-x^n} dx = \frac{1}{n}\int_0^{\infty} e^{-u} u^{\frac{1}{n}-1} du \xrightarrow{ Using \eqref{Gamma}} \frac{1}{n}\Gamma(\frac{1}{n}) \]
Applying this general result, we obtain the following values: \\
For n=1, we get   $\frac{\Gamma(1)}{1}  $ = 1 \\
For n=2, we get   $\frac{1}{2}\Gamma(\frac{1}{2}) =\frac{\sqrt{\pi}}{2}   $ \cite{C2016} \\
We do not have a closed form for   $n>2  $. However, we can use the asymptotic approximation for   $\Gamma(\frac{1}{n})$ from \eqref{approx} considering $n$ is large. This approximation validates the claim in \eqref{gen}.  $\,\,    $   
\end{proof}
\section{Gaussian Like Integral of Type-I}
In this section, we prove various results involving integrals in the form of   $\int_0^{\infty}e^{-f(x)^2} dx   $. The limits of integration have been modified in some cases according to the domain of a function. Most of the results are expressed in terms of the error functions: $erf(x)$, $erfc(x)$, and $erfi(x)$. We begin by providing clear definitions for these error functions, establishing relevant lemmas, and then proceeding to serve our main results. \\

In the existing literature, the definition of error functions has sometimes been ambiguous due to the inclusion or omission of   $\frac{\sqrt{\pi}}{2}  $ depending on the author's perspective\cite{EG1969}. To make the following clarity, we adopt the following definitions:
\begin{definition}
 
For a complex number x,
\begin{equation}\label{erf}
 \int_0^x e^{-t^2} dt = \frac{\sqrt{\pi}}{2}erf(x) 
\end{equation}
\begin{equation}\label{erfc}
 \int_x^{\infty} e^{-t^2} dt = \frac{\sqrt{\pi}}{2} erfc(x) 
\end{equation}
\begin{equation}\label{erfi} 
 \int_0^x e^{t^2} dt = \frac{\sqrt{\pi}}{2} erfi(x)  
\end{equation}
\end{definition}

We use these definitions without referencing them in the later sections. \\

We present some properties of error functions that will be helpful during the evaluation of the Gaussian-like Integrals. 
\begin{lemma}
For a complex number x,
\begin{equation}\label{irelation}
 erf(ix)=ierfi(x) 
\end{equation}
\begin{equation}\label{crelation}
 erf(x)+erfc(x)=1 
\end{equation}
\end{lemma}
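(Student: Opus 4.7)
The plan is to derive both identities directly from the integral definitions given in Definition~1, using only an elementary complex substitution and the value of the classical Gaussian integral already recorded in (\ref{gen}).

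For the second identity (\ref{crelation}), my approach is to add the defining integrals (\ref{erf}) and (\ref{erfc}): the sum $\tfrac{\sqrt{\pi}}{2}\bigl(\mathrm{erf}(x)+\mathrm{erfc}(x)\bigr)$ collapses by additivity of the integral to $\int_0^\infty e^{-t^2}\,dt$, and (\ref{gen}) with $n=2$ evaluates this to $\tfrac{\sqrt{\pi}}{2}$. Dividing through yields the identity. For complex $x$, I would appeal to the fact that $e^{-t^2}$ is entire, so the additivity of line integrals remains valid once a contour from $0$ through $x$ to $\infty$ has been fixed.

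For the first identity (\ref{irelation}), I would parametrize the straight segment from $0$ to $ix$ by $t = iu$ with $u \in [0,x]$, giving $dt = i\,du$ and $t^2 = -u^2$. Substituting into (\ref{erf}) produces
\[
\tfrac{\sqrt{\pi}}{2}\,\mathrm{erf}(ix) \;=\; \int_0^{ix} e^{-t^2}\,dt \;=\; i\int_0^x e^{u^2}\,du \;=\; i\cdot \tfrac{\sqrt{\pi}}{2}\,\mathrm{erfi}(x),
\]
where the last equality is (\ref{erfi}); cancelling the common factor $\tfrac{\sqrt{\pi}}{2}$ produces the claim.

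The only conceptual obstacle is that (\ref{erf})--(\ref{erfi}) are written as integrals from $0$ to $x$ and therefore implicitly require a choice of contour when $x$ is complex. The natural convention --- the straight segment from $0$ to $x$ --- is exactly what makes the substitution $t = iu$ transparent in the proof of (\ref{irelation}), while analyticity of $e^{\pm t^2}$ guarantees path independence so that the identities are well-defined regardless of the contour chosen. Beyond this mild point, both results reduce to the one-line calculations described above.
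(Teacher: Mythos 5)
Your proposal is correct and takes essentially the same route as the paper: the paper proves (\ref{irelation}) by the substitution $-it \to t$ (equivalently your $t = iu$) in the defining integral, and proves (\ref{crelation}) by adding the two defining integrals to obtain $\int_0^{\infty} e^{-t^2}\,dt = \tfrac{\sqrt{\pi}}{2}$. Your added remark on the choice of contour and path independence for complex $x$ is a sensible clarification that the paper leaves implicit, but it does not change the argument.
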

\begin{proof}
\[ erf(ix) =\frac{2}{\sqrt{\pi}} \int_0^{ix} e^{-t^2} dt \xrightarrow{-it\rightarrow t}  \frac{2}{\sqrt{\pi}} \int_0^x e^{t^2} i dt  = ierfi(x)  \] which proves \eqref{irelation}. 
\[ erf(x)+erfc(x) =  \frac{2}{\sqrt{\pi}} \int_0^{x} e^{-t^2} dt +\frac{2}{\sqrt{\pi}} \int_x^{\infty} e^{-t^2} dt = \frac{2}{\sqrt{\pi}} \int_0^{\infty} e^{-t^2} dt = \frac{2}{\sqrt{\pi}} \times \frac{\sqrt{\pi}}{2} = 1    \]
which proves \eqref{crelation}. 
\end{proof}

\begin{lemma}
For a complex number x,
\begin{equation}\label{odderf}
 erf(-x)=-erf(x) 
\end{equation}
\begin{equation}\label{odderfi}
 erfi(-x)=-erfi(x) 
\end{equation}
\begin{equation}\label{negerfc}
 erfc(-x)=2-erfc(x) 
\end{equation}
\end{lemma}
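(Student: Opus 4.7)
The plan is to prove each of the three identities by direct manipulation of the defining integrals in \eqref{erf}, \eqref{erfc}, and \eqref{erfi}, together with the complementarity relation \eqref{crelation} already established in the previous lemma. None of the three claims is deep; the whole lemma is essentially a bookkeeping exercise in tracking signs and limits, so I do not expect any genuine obstacle.

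For \eqref{odderf}, I would begin from $\frac{\sqrt{\pi}}{2}\,erf(-x) = \int_0^{-x} e^{-t^2}\,dt$ and apply the substitution $u = -t$. Since $e^{-t^2}$ is invariant under $t \mapsto -t$, the bounds transform from $(0,-x)$ to $(0,x)$, and the accompanying $dt = -du$ produces an overall minus sign, giving $-\int_0^{x} e^{-u^2}\,du = -\frac{\sqrt{\pi}}{2}\,erf(x)$. For complex $x$ the integral is interpreted as a contour integral along the straight segment from $0$ to $-x$, and the substitution merely reparametrises it as the segment from $0$ to $x$; because $e^{-t^2}$ is entire, no analytic obstruction arises.

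Identity \eqref{odderfi} follows by the identical substitution applied to $e^{t^2}$, which is likewise an even entire function. As a sanity check, the same result can be obtained by replacing $x$ with $-x$ in the relation $erf(ix) = i\,erfi(x)$ from \eqref{irelation} and invoking \eqref{odderf}: the left side becomes $erf(-ix) = -erf(ix) = -i\,erfi(x)$, which must equal $i\,erfi(-x)$, forcing $erfi(-x) = -erfi(x)$.

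For \eqref{negerfc}, I would apply \eqref{crelation} at the argument $-x$ to obtain $erf(-x) + erfc(-x) = 1$, substitute \eqref{odderf} to rewrite this as $erfc(-x) = 1 + erf(x)$, and finally replace $erf(x)$ by $1 - erfc(x)$ (again via \eqref{crelation}) to conclude $erfc(-x) = 2 - erfc(x)$. The only pitfall anywhere in the lemma is the correct sign bookkeeping under $u = -t$; once that is handled the three identities fall out immediately.
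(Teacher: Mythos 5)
Your proposal is correct and follows essentially the same route as the paper: the substitution $t\mapsto -t$ in the defining integrals for \eqref{odderf} and \eqref{odderfi}, and the chain $erfc(-x)=1-erf(-x)=1+erf(x)=2-erfc(x)$ for \eqref{negerfc}. The added remarks on the contour interpretation for complex arguments and the cross-check of \eqref{odderfi} via \eqref{irelation} are fine but not needed beyond what the paper does.
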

\begin{proof}
\[ erf(-x) = \frac{2}{\sqrt{\pi}} \int_0^{-x} e^{-t^2} dt \xrightarrow{-t\rightarrow t} - \frac{2}{\sqrt{\pi}} \int_0^x e^{-t^2} dt  = -erf(x)  \] which proves lemma \eqref{odderf}. Proof of lemma $\eqref{odderfi}$ is similar. 
To prove lemma $\eqref{negerfc}$, we use lemma $\eqref{odderf}$ and lemma $\eqref{crelation}$. 
\[ erfc(-x) \xrightarrow{From\, \eqref{crelation}} 1- erf(-x) \xrightarrow{From\, \eqref{odderf}} 1+erf(x) \xrightarrow{From\, \eqref{crelation}} 2-erfc(x)  \,\,  \]
\end{proof}

\begin{lemma}

\begin{equation}\label{erfvalues}
 erf(\infty)=1; erf(0)=0; erf(-\infty)=-1 
\end{equation}
\begin{equation}\label{erfcvalues}
 erfc(0)=1 ; erfc(\infty)=0 ; erfc(-\infty)=2 
\end{equation}
\begin{equation}\label{erfivalues}
 erfi(0)=0 
\end{equation}
\end{lemma}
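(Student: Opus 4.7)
The plan is to read each value directly off the integral definitions \eqref{erf}, \eqref{erfc}, \eqref{erfi}, using only two external facts: (a) an integral over a degenerate interval vanishes, and (b) the classical Gaussian identity $\int_0^{\infty} e^{-t^2}\,dt = \sqrt{\pi}/2$, which is the $n=2$ instance of formula \eqref{gen} already established in Theorem 2.1. The remaining values at $\pm\infty$ will then be obtained algebraically by invoking the symmetry and complement identities \eqref{odderf}, \eqref{crelation}, and \eqref{negerfc} from the previous two lemmas, so that no new analysis is required beyond what has already been set up.

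Concretely, I would proceed in three waves. First, the values at $0$: setting $x=0$ in \eqref{erf} and \eqref{erfi} collapses each integration interval to a point, giving $erf(0)=erfi(0)=0$ immediately; combining $erf(0)=0$ with \eqref{crelation} then yields $erfc(0)=1$. Second, the limits at $+\infty$: \eqref{erf} together with the Gaussian integral gives $\frac{\sqrt{\pi}}{2}erf(\infty) = \int_0^\infty e^{-t^2}\,dt = \frac{\sqrt{\pi}}{2}$, so $erf(\infty)=1$; then \eqref{crelation} (or equivalently \eqref{erfc} with $x\to\infty$, whose integration interval collapses) produces $erfc(\infty)=0$. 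Third, the limits at $-\infty$: \eqref{odderf} gives $erf(-\infty)=-erf(\infty)=-1$, and \eqref{negerfc} gives $erfc(-\infty)=2-erfc(\infty)=2$.

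There is essentially no obstacle here; the lemma is a tabulation of limiting values. The only point that might merit a sentence of justification is interpreting ``value at $\pm\infty$'' as the corresponding improper integral limit, which is legitimate because $e^{-t^2}$ is absolutely integrable over $\mathbb{R}$. Beyond that, each assertion reduces to either an empty-interval integral, the single nontrivial Gaussian integral at $+\infty$, or an algebraic consequence of a previously proved identity, so the write-up should be a short chain of substitutions with no case that requires additional machinery.
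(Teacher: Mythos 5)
Your proposal is correct and follows essentially the same route as the paper: read the values at $0$ and $+\infty$ off the integral definitions (using the degenerate interval and the Gaussian integral $\int_0^\infty e^{-t^2}\,dt=\sqrt{\pi}/2$), then get the $-\infty$ values from \eqref{odderf} and \eqref{negerfc}. The only cosmetic difference is that the paper evaluates $erfc(0)$ directly as $\frac{2}{\sqrt{\pi}}\int_0^\infty e^{-t^2}\,dt$ rather than via \eqref{crelation}, which changes nothing of substance.
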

\begin{proof}
\[ erf(\infty) = \frac{2}{\sqrt{\pi}} \int_0^{\infty} e^{-t^2} dt =\frac{2}{\sqrt{\pi}} \times \frac{\sqrt{\pi}}{2} = 1 ; erf(0) =  \frac{2}{\sqrt{\pi}} \int_0^0 e^{-t^2} dt = 0 ; erf(-\infty)= -erf(\infty)=-1   \] which proves lemmas in \eqref{erfvalues}. 
\[ erfc(0) = \frac{2}{\sqrt{\pi}} \int_0^{\infty} e^{-t^2} dt =\frac{2}{\sqrt{\pi}} \times \frac{\sqrt{\pi}}{2} = 1 ; erfc(\infty) =  \frac{2}{\sqrt{\pi}} \int_{\infty}^{\infty} e^{-t^2} dt = 0 ; erfc(-\infty)= 2-erfc(\infty)=2-0=2   \] which proves lemmas in \eqref{erfcvalues}. 
\[ erfi(0) = \frac{2}{\sqrt{\pi}} \int_0^0 e^{t^2} dt =0   \] which proves lemma in \eqref{erfivalues}.  \end{proof}

\begin{theorem}
\begin{equation}
\int_0^{\infty} e^{-\ln(x)^2} dx = \sqrt[4]{e} \sqrt{\pi}
\end{equation}
\end{theorem}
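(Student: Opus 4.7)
The plan is to reduce the integral to the standard Gaussian $\int_{-\infty}^{\infty} e^{-t^2} dt = \sqrt{\pi}$ by a single substitution followed by completing the square. The presence of $\ln(x)^2$ in the exponent is the natural signal to set $u = \ln(x)$, which turns the logarithm into a polynomial in $u$ and converts the domain $(0,\infty)$ into the whole real line.

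First I would substitute $u = \ln(x)$, so that $x = e^u$ and $dx = e^u\,du$. The limits become $u \to -\infty$ as $x \to 0^+$ and $u \to \infty$ as $x \to \infty$, giving
\begin{equation*}
\int_0^{\infty} e^{-\ln(x)^2}\,dx = \int_{-\infty}^{\infty} e^{-u^2 + u}\,du.
\end{equation*}
Next I would complete the square in the exponent: $-u^2 + u = -\bigl(u - \tfrac{1}{2}\bigr)^2 + \tfrac{1}{4}$. Pulling out the constant factor $e^{1/4}$ leaves a translated Gaussian $\int_{-\infty}^{\infty} e^{-(u-1/2)^2}\,du$, which after the trivial shift $v = u - \tfrac{1}{2}$ equals $\sqrt{\pi}$ by the standard Gaussian integral (obtained in the excerpt as the $n=2$ case of Theorem \ref{gen} extended to the full real line by symmetry, or equivalently from $erf(\infty) - erf(-\infty) = 2$). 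Multiplying the two pieces yields $e^{1/4}\sqrt{\pi} = \sqrt[4]{e}\,\sqrt{\pi}$.

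There is essentially no obstacle here: the substitution is forced by the form of the integrand, completing the square is routine, and the remaining evaluation is a named identity already used in the paper. The only minor point worth stating carefully is the change of limits under $u = \ln(x)$, ensuring the integral is genuinely over all of $\mathbb{R}$ so that the shifted Gaussian integrates to $\sqrt{\pi}$ rather than to a half-line expression in $erf$.
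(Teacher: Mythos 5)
Your proposal is correct and follows essentially the same route as the paper: the substitution $u=\ln(x)$, completing the square to get $e^{1/4}\int_{-\infty}^{\infty}e^{-(u-1/2)^2}\,du$, and evaluating the shifted Gaussian as $\sqrt{\pi}$. Your explicit attention to the change of limits is a welcome clarification of a step the paper leaves implicit.
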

\begin{proof}
\[ \int_0^{\infty} e^{-\ln(x)^2} dx \xrightarrow{ln(x)\rightarrow x } \int_{-\infty}^{\infty} e^{-x^2} e^x  dx = \int_{-\infty}^{\infty} e^{-(x-\frac{1}{2})^2+\frac{1}{4}} dx  \xrightarrow{(x-\frac{1}{2}) \rightarrow x} \sqrt[4]{e} \int_{-\infty}^{\infty}e^{-x^2} dx = \sqrt[4]{e} \sqrt{\pi}  \,\,   \]
\end{proof}

\begin{theorem}
\begin{equation}
 \int_0^{\infty} e^{-W(x)^2} dx = \sqrt[4]{e} \left[\frac{3\sqrt{\pi}}{4}+\frac{e^{-\frac{1}{4}}}{2}-\frac{3\sqrt{\pi}}{4}erf\left(-\frac{1}{2}\right)  \right]   
\end{equation}
where W(x) is Lambert W function, also known as the product logarithm function.
\end{theorem}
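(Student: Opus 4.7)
The defining identity of the Lambert $W$ function is $W(x)e^{W(x)} = x$, which suggests the substitution $u = W(x)$, so that $x = ue^u$ and $dx = (1+u)e^u\,du$. Since $W(0)=0$ and $W(x) \to \infty$ as $x \to \infty$, the limits transform nicely. The integral becomes
\[
\int_0^{\infty} e^{-W(x)^2} dx = \int_0^{\infty} e^{-u^2}(1+u)e^u\,du = \int_0^{\infty} e^{-u^2+u}(1+u)\,du.
\]

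Next, I would complete the square in the exponent: $-u^2 + u = -\bigl(u-\tfrac12\bigr)^2 + \tfrac14$. Pulling out the $e^{1/4} = \sqrt[4]{e}$ factor and substituting $v = u - \tfrac12$ (which shifts the lower limit to $-\tfrac12$ and leaves the upper limit at $\infty$), while rewriting $1+u = \tfrac32 + v$, the integral takes the form
\[
\sqrt[4]{e}\int_{-1/2}^{\infty} e^{-v^2}\Bigl(\tfrac{3}{2}+v\Bigr)\,dv = \sqrt[4]{e}\left[\tfrac{3}{2}\int_{-1/2}^{\infty} e^{-v^2}\,dv + \int_{-1/2}^{\infty} v\,e^{-v^2}\,dv\right].
\]

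The second piece is elementary: the antiderivative of $ve^{-v^2}$ is $-\tfrac12 e^{-v^2}$, so that term evaluates to $\tfrac12 e^{-1/4}$. The first piece I would split as $\int_{-1/2}^{0} + \int_{0}^{\infty}$, recognizing the second half as $\tfrac{\sqrt{\pi}}{2}$ and the first half, via the definition \eqref{erf}, as $-\tfrac{\sqrt{\pi}}{2}\,\mathrm{erf}(-\tfrac12)$. Assembling the pieces gives exactly $\sqrt[4]{e}\bigl[\tfrac{3\sqrt{\pi}}{4} + \tfrac{e^{-1/4}}{2} - \tfrac{3\sqrt{\pi}}{4}\,\mathrm{erf}(-\tfrac12)\bigr]$.

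The only step that requires care is the initial substitution itself: one needs to be sure that $u = W(x)$ is a valid change of variables on $[0,\infty)$. Since $W$ restricted to the principal branch is a smooth, strictly increasing bijection from $[0,\infty)$ to $[0,\infty)$ with $W'(x) > 0$, and since $du/dx = W'(x)$ gives the inverse of $dx/du = (1+u)e^u$, this is immediate; no contour trickery or branch issues arise. Everything after that is a routine completion-of-the-square and identification of error-function pieces.
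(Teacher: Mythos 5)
Your proposal is correct and follows essentially the same route as the paper: substitute $u=W(x)$ to get $\int_0^\infty e^{-u^2+u}(1+u)\,du$, complete the square, split off the elementary $v e^{-v^2}$ piece (yielding $\tfrac12 e^{-1/4}$), and express the remainder via $\mathrm{erf}$. The only cosmetic difference is that you shift to $v=u-\tfrac12$ and split the range at $0$, whereas the paper evaluates $\mathrm{erf}\bigl(x-\tfrac12\bigr)\big|_0^{\infty}$ directly; the computations are identical.
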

\begin{proof}
\[ \int_0^{\infty} e^{-W(x)^2} dx \xrightarrow{W(x)\rightarrow x } \int_{0}^{\infty} e^{-x^2} e^x(x+1)  dx = \int_{0}^{\infty} e^{-(x-\frac{1}{2})^2+\frac{1}{4}} (x-\frac{1}{2}+\frac{3}{2}) dx  \]
\[=\sqrt[4]{e} \left[ \frac{e^{-(x-\frac{1}{2})^2}}{-2} \Big|_0^{\infty} + \frac{3}{2} \int_0^{\infty} e^{-(x-\frac{1}{2})^2} dx  \right] = \sqrt[4]{e} \left[\frac{1}{2}e^{-\frac{1}{4}} + \frac{3}{2}\times \frac{\sqrt{\pi}}{2} erf(x-\frac{1}{2})\Big|_0^{\infty}   \right] \]
\[ = \sqrt[4]{e} \left[\frac{1}{2}e^{-\frac{1}{4}} + \frac{3\sqrt{\pi}}{4}-\frac{3\sqrt{\pi}}{4}erf(-\frac{1}{2}) \right] \,\,   \]
\end{proof}

Now,  let's consider more interesting functions for $f(x)$, including trigonometric, inverse trigonometric, and inverse hyperbolic functions. For trigonometric functions, we restrict the domain of integration from 0 to   $\frac{\pi}{2}  $. 

 \begin{theorem}
\begin{equation}\label{taneqn}
 \int_0^{\frac{\pi}{2}} e^{-\tan^2(x)} dx = \frac{e\pi}{2}erfc(1)  
\end{equation}
\end{theorem}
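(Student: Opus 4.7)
The plan is to reduce the trigonometric integral to a rational-function-weighted Gaussian integral and then introduce an auxiliary parameter so that differentiation under the integral sign produces a pure Gaussian whose antiderivative is the error function.

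First I would apply the substitution $u=\tan(x)$, so that $du=(1+u^2)\,dx$ and the limits become $0$ and $\infty$. This converts the left-hand side of \eqref{taneqn} into
\[
\int_0^{\infty}\frac{e^{-u^2}}{1+u^2}\,du.
\]
This is the standard form in which the combination $e^{-1}\cdot\tfrac{\pi}{2}\mathrm{erfc}(1)$ is expected to appear, so the factor of $e$ in the target answer strongly suggests introducing a parameter that moves the constant out front.

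Next I would define, for $a\geq 0$,
\[
I(a)=\int_0^{\infty}\frac{e^{-a(1+u^2)}}{1+u^2}\,du,
\]
so that $I(0)=\pi/2$ and the desired integral equals $e\cdot I(1)$. Differentiating under the integral sign (justified by the exponential decay in $u$ for $a>0$), the $1+u^2$ in the denominator is cancelled:
\[
I'(a)=-\int_0^{\infty}e^{-a(1+u^2)}\,du=-e^{-a}\cdot\tfrac{1}{2}\sqrt{\pi/a}.
\]
Integrating from $0$ to $1$ and substituting $t=\sqrt{a}$ turns $I'(a)\,da$ into $-\sqrt{\pi}\,e^{-t^2}\,dt$, whose integral from $0$ to $1$ is $-\tfrac{\pi}{2}\mathrm{erf}(1)$ by \eqref{erf}. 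Hence $I(1)=\tfrac{\pi}{2}-\tfrac{\pi}{2}\mathrm{erf}(1)=\tfrac{\pi}{2}\mathrm{erfc}(1)$ using \eqref{crelation}, and multiplying by $e$ gives the claimed value.

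The only technical point worth caring about is the legitimacy of Feynman's trick and the boundary value $I(0)=\pi/2$ (immediate from $\arctan$), both of which are routine given the uniform exponential convergence of the integrand for $a$ in a neighborhood of any positive value. The main conceptual step — and the one I would expect to be the "trick" — is recognizing that sneaking in the factor $e^{-a}$ inside the integrand (equivalent to multiplying by $e$ outside) is exactly what decouples the $1+u^2$ denominator after differentiation, letting the error function appear naturally.
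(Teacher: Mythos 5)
Your proof is correct, and it reaches the result by a genuinely different device than the paper. After the common substitution $u=\tan(x)$ giving $\int_0^{\infty}\frac{e^{-u^2}}{1+u^2}\,du$, the paper removes the denominator not by parameter differentiation but by the integral representation $\frac{e^{-(1+u^2)}}{1+u^2}=\int_1^{\infty}e^{-(1+u^2)t}\,dt$, then applies Fubini's theorem to swap the order of integration; the inner Gaussian integral and the substitution $t\to t^2$ land directly on $e\sqrt{\pi}\int_1^{\infty}e^{-t^2}\,dt=\frac{e\pi}{2}erfc(1)$, with no need for the boundary value $I(0)=\pi/2$ or the identity \eqref{crelation}. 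Your Feynman-trick version is essentially the dual of this: since $I(\infty)=0$, integrating your $I'(a)=-e^{-a}\tfrac{1}{2}\sqrt{\pi/a}$ over $[1,\infty)$ instead of $[0,1]$ reproduces the paper's computation line for line, whereas integrating over $[0,1]$ as you do trades the improper upper limit for the known value $I(0)=\pi/2$ plus the complement relation, and produces $erfc(1)$ only at the end as $1-erf(1)$ rather than directly as $\frac{2}{\sqrt{\pi}}\int_1^{\infty}e^{-t^2}\,dt$. The two approaches carry comparable analytic overhead (Fubini for a nonnegative integrand versus differentiation under the integral sign, where you should note that $I'(a)$ is unbounded as $a\to 0^+$ but remains integrable on $(0,1]$, so the fundamental theorem of calculus still applies once continuity of $I$ at $a=0$ is checked); neither is more elementary, and both are fully rigorous.
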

\begin{proof}
\[ \int_0^{\frac{\pi}{2}} e^{-\tan^2(x)} dx \xrightarrow{tan(x)\rightarrow x} \int_0^{\infty} \frac{e^{-x^2}}{1+x^2} dx = e \int_0^{\infty} \frac{e^{-(x^2+1)}}{1+x^2} dx = e\int_0^{\infty} \int_1^{\infty} e^{-(x^2+1) t} dt dx \]
Using Fubini's theorem to switch the order of integration
\[ e\int_1^{\infty} \int_0^{\infty} e^{-(x^2+1)t} dx dt = e\int_1^{\infty}e^{-t} \int_0^{\infty} e^{-x^2t} dx dt = e\int_1^{\infty}e^{-t} \frac{1}{2}\sqrt{\frac{\pi}{t}} dt = \frac{e\sqrt{\pi}}{2}\int_1^{\infty}e^{-t} t^{-\frac{1}{2}} dt \]
\[ \frac{e\sqrt{\pi}}{2}\int_1^{\infty}e^{-t} t^{-\frac{1}{2}} dt \xrightarrow{t\rightarrow t^2} e\sqrt{\pi} \int_1^{\infty} e^{-t^2} dt = e\sqrt{\pi} \times \frac{\sqrt{\pi}}{2} erfc(1) = \frac{e\pi}{2} erfc(1) \,\,   \]
\end{proof}

 \begin{theorem}
\begin{equation}\label{coteqn}
  \int_0^{\frac{\pi}{2}} e^{-\cot^2(x)} dx = \frac{e\pi}{2}erfc(1)  
\end{equation}
\end{theorem}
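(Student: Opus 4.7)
The plan is to reduce this integral to the tangent case already handled in Theorem \eqref{taneqn}, and the cleanest route is to exploit the co-function identity $\cot(x) = \tan\!\bigl(\frac{\pi}{2}-x\bigr)$. So the first step would be to apply the substitution $y = \frac{\pi}{2}-x$, which sends the interval $[0,\pi/2]$ to itself (with orientation reversed, cancelling the minus sign from $dy = -dx$). Under this substitution $\cot^2(x)$ becomes $\tan^2(y)$, and the integral instantly transforms into $\int_0^{\pi/2} e^{-\tan^2(y)}\,dy$, which by \eqref{taneqn} equals $\frac{e\pi}{2}\,erfc(1)$.

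As an alternative (and as a sanity check that parallels the style of the previous proof), I would carry out the direct substitution $u = \cot(x)$, so $du = -\csc^2(x)\,dx = -(1+u^2)\,dx$, with $u$ ranging from $+\infty$ down to $0$. This yields
\[
\int_0^{\pi/2} e^{-\cot^2(x)}\,dx \;=\; \int_0^{\infty} \frac{e^{-u^2}}{1+u^2}\,du,
\]
which is exactly the integral that appeared in the middle of the proof of \eqref{taneqn}. From here one could either cite that proof directly or repeat the Fubini trick: write $\frac{1}{1+u^2} = \int_1^{\infty} e^{-(u^2+1)t+1}\,dt \cdot e^{-1}$, swap the order of integration, evaluate the inner Gaussian in $u$, and finish with the substitution $t \to t^2$ to produce $\frac{e\pi}{2}\,erfc(1)$.

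There is no real obstacle here, since the co-function symmetry does essentially all the work; the only thing to be careful about is the sign/orientation bookkeeping in the substitution $y = \frac{\pi}{2}-x$, and confirming that the integrand extends consistently to the endpoints where $\cot(x)$ blows up (the integrand $e^{-\cot^2 x}$ tends to $0$ at $x=0$, matching the behavior of $e^{-\tan^2 y}$ at $y=\pi/2$). Given the brevity of the argument, I would present just the one-line substitution proof and immediately invoke Theorem \eqref{taneqn}.
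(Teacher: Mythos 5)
Your primary argument --- the substitution $y=\frac{\pi}{2}-x$ turning $\cot^2(x)$ into $\tan^2(y)$ and reducing everything to Theorem \eqref{taneqn} --- is exactly the paper's proof, which merely packages that reflection as the lemma $\int_a^b f(x)\,dx=\int_a^b f(a+b-x)\,dx$ before invoking the tangent result. The proof is correct; only note that in your optional sanity-check sketch the identity $\frac{1}{1+u^2}=e^{-1}\int_1^{\infty}e^{-(u^2+1)t+1}\,dt$ actually yields $\frac{e^{-(u^2+1)}}{1+u^2}$, so the correct form (as in the paper) is $\frac{e^{-u^2}}{1+u^2}=e\int_1^{\infty}e^{-(u^2+1)t}\,dt$, but this does not affect your main one-line proof.
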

We establish a lemma before dealing with the theorem. 
\begin{lemma}
\begin{equation}\label{King}
 \int_a^b f(x) dx = \int_a^b f(a+b-x) dx 
\end{equation}
\end{lemma}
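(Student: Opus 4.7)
The plan is to prove this by a direct change of variables applied to the right-hand side. Concretely, I would set $u = a+b-x$ in the integral $\int_a^b f(a+b-x)\,dx$. Then $du = -dx$, and the limits transform as $x=a \Rightarrow u=b$ and $x=b \Rightarrow u=a$, so the integral becomes $\int_b^a f(u)(-du) = \int_a^b f(u)\,du$. Renaming the dummy variable $u$ back to $x$ yields $\int_a^b f(x)\,dx$, which is exactly the left-hand side.

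There is no real obstacle here; the only minor point to track carefully is the interaction between the sign flip from $du = -dx$ and the swap of the integration limits, which together restore the original orientation. Since the identity holds for any integrable $f$ on $[a,b]$, no continuity or smoothness hypothesis beyond integrability is needed, and the substitution is valid because $x \mapsto a+b-x$ is a smooth bijection of $[a,b]$ onto itself with Jacobian $-1$. I would present the proof as a single substitution line, mirroring the style used earlier in the excerpt for the proofs of the error-function identities.
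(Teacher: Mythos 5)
Your proposal is correct and is essentially the same argument as the paper's: both rely on the substitution $x \mapsto a+b-x$ together with the sign change from reversing the limits of integration, with the only cosmetic difference being that you transform the right-hand side into the left while the paper goes in the opposite direction. No gap to report.
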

\begin{proof}
\[ \int_a^b f(x) dx \xrightarrow{x\rightarrow a+b-x} -\int_b^a f(a+b-x)  dx \xrightarrow{Reflection\,\, Property\,\,} \int_a^b f(a+b-x) dx \]
\end{proof}
Now, we are ready for the theorem \eqref{coteqn}.\\
\begin{proof}
\[ \int_0^{\frac{\pi}{2}} e^{-\cot^2(x)} dx \xrightarrow{ Eqn. \eqref{King}}\int_0^{\frac{\pi}{2}} e^{-\cot^2(\frac{\pi}{2}-x)} dx = \int_0^{\frac{\pi}{2}} e^{-tan^2(x)} dx \xrightarrow{Eqn. \eqref{taneqn}}= \frac{e\pi}{2} erfc(1)  \,\,   \]
\end{proof}

 \begin{theorem}
\begin{equation}\label{seceqn}
  \int_0^{\frac{\pi}{2}} e^{-\sec^2(x)} dx = \frac{\pi}{2}erfc(1)  
\end{equation}
\end{theorem}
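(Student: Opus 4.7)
The plan is to mimic the proof of \eqref{taneqn} and reduce the integral to a form already handled there. The substitution $u=\tan(x)$ is natural: it gives $du=\sec^2(x)\,dx$, so $dx=\frac{du}{1+u^2}$, and by the Pythagorean identity $\sec^2(x)=1+u^2$. The limits transform to $0$ and $\infty$, so
\[
\int_0^{\pi/2} e^{-\sec^2(x)}\,dx \;=\; \int_0^{\infty} \frac{e^{-(1+u^2)}}{1+u^2}\,du \;=\; \frac{1}{e}\int_0^{\infty} \frac{e^{-u^2}}{1+u^2}\,du.
\]

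Next I would invoke the intermediate identity extracted from the proof of Theorem \eqref{taneqn}: the substitution $\tan(x)\to x$ there showed that $\int_0^{\pi/2} e^{-\tan^2(x)}\,dx = \int_0^{\infty}\frac{e^{-u^2}}{1+u^2}\,du$, and the Fubini argument that followed evaluated the right-hand side to $\frac{e\pi}{2}\,\mathrm{erfc}(1)$. Substituting this value back gives
\[
\frac{1}{e}\cdot \frac{e\pi}{2}\,\mathrm{erfc}(1) \;=\; \frac{\pi}{2}\,\mathrm{erfc}(1),
\]
which is the claim.

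There is essentially no obstacle here; the only subtlety is writing $\sec^2(x)=1+\tan^2(x)$ to extract the constant factor $e^{-1}$, which exactly cancels the factor of $e$ that appears in Theorem \eqref{taneqn}. If one prefers a self-contained argument without appealing to the tangent computation, one could alternatively write $\frac{1}{1+u^2}=\int_0^{\infty} e^{-(1+u^2)s}\,ds$, apply Fubini, and then use the Gaussian value of $\int_0^{\infty}e^{-u^2 t}\,du$ together with the substitution $t\to t^2$; but reusing the earlier result is cleaner and keeps the proof to a single line after the substitution.
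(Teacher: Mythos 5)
Your proposal is correct and follows essentially the same route as the paper: both exploit $\sec^2(x)=1+\tan^2(x)$ to extract the factor $e^{-1}$ and then invoke Theorem \eqref{taneqn}; the only cosmetic difference is that you substitute $u=\tan(x)$ first and quote the intermediate integral $\int_0^{\infty}e^{-u^2}/(1+u^2)\,du$, whereas the paper factors the exponent before citing the tangent result directly.
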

\begin{proof}
\[ \int_0^{\frac{\pi}{2}} e^{-\sec^2(x)} dx \xrightarrow{\sec^2(x)=1+\tan^2(x)}\int_0^{\frac{\pi}{2}} e^{-1-tan^2(x)} dx =\frac{1}{e} \int_0^{\frac{\pi}{2}} e^{-tan^2(x)} dx \xrightarrow{Eqn. \eqref{taneqn}} = \frac{\pi}{2} erfc(1) \,\,   \]
\end{proof}

 \begin{theorem}
\begin{equation}\label{csceqn}
\int_0^{\frac{\pi}{2}} e^{-\csc^2(x)} dx = \frac{\pi}{2}erfc(1)  
\end{equation}
\end{theorem}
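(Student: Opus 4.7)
The plan is to mimic exactly the trick used for the cotangent integral \eqref{coteqn}. That proof converted $\cot$ into $\tan$ via the reflection lemma \eqref{King} with $a=0$, $b=\pi/2$, exploiting the cofunction identity $\cot(\pi/2-x)=\tan(x)$. Here the analogous cofunction identity $\csc(\pi/2-x)=\sec(x)$ suggests converting the $\csc$ integrand into a $\sec$ integrand and then invoking the already-established formula \eqref{seceqn}.

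Concretely, I would first apply lemma \eqref{King} with $f(x)=e^{-\csc^2 x}$, $a=0$, $b=\pi/2$, to write
\[
\int_0^{\pi/2} e^{-\csc^2(x)}\,dx = \int_0^{\pi/2} e^{-\csc^2(\pi/2 - x)}\,dx.
\]
Next I would simplify the integrand using $\csc(\pi/2-x)=\sec(x)$, turning the right-hand side into $\int_0^{\pi/2} e^{-\sec^2(x)}\,dx$. Finally I would cite theorem \eqref{seceqn} to conclude that this equals $\frac{\pi}{2}\,\mathrm{erfc}(1)$, which is exactly the desired value.

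There is no real obstacle here: the argument is a two-line reduction, and all heavy lifting (the tangent evaluation via Fubini and the Gaussian identity, then its reduction to $\sec$) has already been done in theorems \eqref{taneqn} and \eqref{seceqn}. The only minor thing to note for rigor is that $\csc^2(x)$ is not defined at $x=0$ and $\sec^2(x)$ is not defined at $x=\pi/2$, so strictly speaking the integrals are improper at one endpoint; however, $e^{-\csc^2 x}\to 0$ and $e^{-\sec^2 x}\to 0$ at these respective endpoints (indeed super-exponentially fast), so both integrands extend continuously to the closed interval by zero and the substitution $x\mapsto \pi/2-x$ is valid without any boundary issue. Thus the proof reduces to the single identity $\csc(\pi/2-x)=\sec(x)$ combined with the previously established theorems.
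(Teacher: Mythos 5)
Your proposal is correct and follows exactly the paper's own argument: apply the reflection lemma \eqref{King} on $[0,\pi/2]$, use the cofunction identity $\csc(\pi/2-x)=\sec(x)$, and invoke theorem \eqref{seceqn}. Your added remark about the integrands extending continuously by zero at the singular endpoints is a small bonus in rigor that the paper omits.
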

\begin{proof}
\[ \int_0^{\frac{\pi}{2}} e^{-\csc^2(x)} dx \xrightarrow{Eqn. \eqref{King}}\int_0^{\frac{\pi}{2}} e^{-\csc^2(\frac{\pi}{2}-x)} dx = \int_0^{\frac{\pi}{2}} e^{-sec^2(x)} dx \xrightarrow{Eqn. \eqref{seceqn}}= \frac{\pi}{2} erfc(1)  \,\,   \]
\end{proof}

 \begin{theorem}
\begin{equation}\label{sineqn}
 \int_0^{\frac{\pi}{2}} e^{-\sin^2(x)} dx =\frac{\pi}{2}e^{-\frac{1}{2}} I_0(\frac{1}{2})  
\end{equation}
where    $I_n(z)  $ is the modified Bessel function of first kind. 
\end{theorem}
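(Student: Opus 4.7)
The plan is to reduce the integrand to a form where a standard integral representation of the modified Bessel function $I_0$ can be invoked. The key algebraic identity I would use is the power-reduction formula $\sin^2(x) = \tfrac{1-\cos(2x)}{2}$, which converts the Gaussian-like factor $e^{-\sin^2(x)}$ into a product of a constant $e^{-1/2}$ and an exponential of a cosine, namely $e^{\cos(2x)/2}$. This is the essential move: it trades the squared trigonometric function for a linear one inside the exponential, which is exactly the shape that matches a Bessel representation.

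After this rewriting, I would pull $e^{-1/2}$ outside the integral and make the substitution $u = 2x$, which rescales the interval of integration from $[0,\pi/2]$ to $[0,\pi]$ and introduces a factor of $\tfrac{1}{2}$. The integral is then
\[
\int_0^{\pi/2} e^{-\sin^2(x)}\,dx \;=\; \frac{e^{-1/2}}{2}\int_0^{\pi} e^{\cos(u)/2}\,du .
\]
At this point the finishing step is to recall the standard integral representation of the modified Bessel function of the first kind of order zero,
\[
I_0(z) \;=\; \frac{1}{\pi}\int_0^{\pi} e^{z\cos(\theta)}\,d\theta,
\]
applied with $z = \tfrac{1}{2}$. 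Substituting yields $\int_0^{\pi} e^{\cos(u)/2}\,du = \pi\, I_0(1/2)$, and combining constants produces exactly $\tfrac{\pi}{2}\, e^{-1/2}\, I_0(1/2)$, as claimed.

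The proof has no real obstacle beyond knowing (or stating as a lemma) the Bessel integral representation; everything else is a one-line identity plus a linear substitution. In the context of this paper, which has been careful to recall definitions of the auxiliary special functions it uses (the error functions, the Gamma function), the cleanest presentation would briefly state this representation of $I_0$ as a supporting fact before the main calculation, so that the three lines above read as a direct computation. No splitting into series, no interchange of sum and integral, and no asymptotic argument is needed.
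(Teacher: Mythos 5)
Your proposal is correct and follows essentially the same route as the paper: the power-reduction identity $\sin^2(x)=\tfrac{1-\cos(2x)}{2}$, the substitution $2x\to x$ rescaling to $[0,\pi]$, and the integral representation $I_0(z)=\tfrac{1}{\pi}\int_0^\pi e^{z\cos\theta}\,d\theta$, which the paper likewise states as a preliminary definition before the computation.
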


Before we begin dealing with theorem   $\eqref{sineqn}  $, we cite the definition of the modified Bessel function of first kind from   \(\cite{W2002}\).

\begin{definition}
For integer n and   $Re(z)>0  $,
\begin{equation} \label{Besselfirst}
 I_n(z) = \frac{1}{\pi} \int_0^{\pi} e^{z \cos(\theta)} \cos(n\theta) \, d\theta 
\end{equation}
where   $I_n(z)  $ is the modified Bessel function of first kind.
\end{definition}
Now we are ready for the theorem \eqref{sineqn}. \\
\begin{proof}
\[ \int_0^{\frac{\pi}{2}} e^{-\sin^2(x)} dx \xrightarrow{1-\cos(2x)=2\sin^2(x)}  \int_0^{\frac{\pi}{2}} e^{-\frac{1}{2}} e^{\frac{\cos(2x)}{2}} dx \xrightarrow{2x \rightarrow x} \frac{1}{2}e^{-\frac{1}{2}} \int_0^{\pi} e^{\frac{1}{2}cos(x)} dx \]
\[ = \frac{1}{2}e^{-\frac{1}{2}} \int_0^{\pi} e^{\frac{1}{2}cos(x)} \cos(0.x) dx = \frac{\pi}{2}e^{-\frac{1}{2}} I_0\left(\frac{1}{2}\right)  \,\,   \]
\end{proof}

\begin{theorem}
\begin{equation}\label{coseqn}
 \int_0^{\frac{\pi}{2}} e^{-\cos^2(x)} dx =\frac{\pi}{2}e^{-\frac{1}{2}} I_0\left(\frac{1}{2}\right)   
\end{equation}
where    $I_n(z)  $ is the modified Bessel function of first kind.  \\
\end{theorem}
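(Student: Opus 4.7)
The plan is to mirror exactly the strategy used for \eqref{coteqn} and \eqref{csceqn}: reduce the $\cos^2(x)$ integral to the already-evaluated $\sin^2(x)$ integral by applying the reflection identity of Lemma \eqref{King}.

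First, I would invoke Lemma \eqref{King} with $a=0$ and $b=\pi/2$, which allows me to replace the integrand $e^{-\cos^2(x)}$ with $e^{-\cos^2(\pi/2-x)}$ without changing the value of the integral. Since $\cos(\pi/2-x)=\sin(x)$, this substitution immediately converts the integral into $\int_0^{\pi/2} e^{-\sin^2(x)}\,dx$.

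Next, I would cite Theorem \eqref{sineqn}, which evaluates exactly that integral and gives the desired closed form $\tfrac{\pi}{2} e^{-1/2} I_0(1/2)$. No new computation involving Bessel functions is required at this step, since all of the analytical work (the half-angle identity, the substitution $2x\to x$, and the identification with \eqref{Besselfirst}) has already been carried out in the proof of \eqref{sineqn}.

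There is no genuine obstacle here: the proof is a two-line corollary of Theorem \eqref{sineqn} together with the reflection lemma, and the structure is identical to the derivations of \eqref{coteqn} from \eqref{taneqn} and of \eqref{csceqn} from \eqref{seceqn}. The only thing to be careful about is making sure the limits $a=0$, $b=\pi/2$ are the ones used when applying \eqref{King}, so that $a+b-x=\pi/2-x$ produces the co-function relation $\cos\to\sin$ cleanly.
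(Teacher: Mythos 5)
Your proposal is correct and matches the paper's own proof exactly: apply the reflection lemma \eqref{King} with $a=0$, $b=\tfrac{\pi}{2}$ to convert $\cos^2(x)$ into $\sin^2(x)$, then invoke Theorem \eqref{sineqn}. Nothing further is needed.
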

\begin{proof}
\[ \int_0^{\frac{\pi}{2}} e^{-\cos^2(x)} dx \xrightarrow{Eqn.  \eqref{King}}\int_0^{\frac{\pi}{2}} e^{-\cos^2(\frac{\pi}{2}-x)} dx = \int_0^{\frac{\pi}{2}} e^{-sin^2(x)} dx \xrightarrow{Eqn. \eqref{sineqn}}= \frac{\pi}{2}e^{-\frac{1}{2}} I_0(\frac{1}{2})  \,\,   \] 
\end{proof}

Next, we take $f(x)$ as inverse trigonometric functions. For inverse trigonometric functions, we restrict the domain of integration from 0 to 1.

 \begin{theorem}
\begin{equation}\label{arcsineqn}
 \int_0^{1} e^{-{\arcsin^2(x)}} dx = \frac{\sqrt{\pi} e^{-\frac{1}{4}}}{4} \left[ erfc(\frac{i}{2}) + erfc(-\frac{i}{2}) + i \left( erfi(\frac{1}{2}-\frac{i\pi}{2}) -erfi(\frac{1}{2}+\frac{i \pi}{2}) +2i \right)  \right] 
\end{equation}
\end{theorem}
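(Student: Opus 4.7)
The plan is to reduce the integral to a familiar Gaussian-times-cosine integral and then evaluate it by completing the square in the exponent. First I would make the substitution $u = \arcsin(x)$, so that $x = \sin(u)$, $dx = \cos(u)\,du$, and the limits $0, 1$ become $0, \pi/2$. This converts the left-hand side into
\[
\int_0^{\pi/2} e^{-u^{2}} \cos(u)\, du,
\]
which is the real workhorse of the proof.

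Next I would expand $\cos(u) = \tfrac{1}{2}(e^{iu} + e^{-iu})$, giving two integrals of the form $\int_0^{\pi/2} e^{-u^{2} \pm iu} du$. In each one I would complete the square: $-u^{2} + iu = -(u - i/2)^{2} - 1/4$, and similarly $-u^{2} - iu = -(u + i/2)^{2} - 1/4$. Pulling out the common factor $e^{-1/4}$ and shifting the contour (justified because $e^{-v^2}$ is entire and decays at $\pm \infty$), I obtain
\[
\frac{e^{-1/4}}{2}\left[\int_{-i/2}^{\pi/2 - i/2} e^{-v^{2}}dv + \int_{i/2}^{\pi/2 + i/2} e^{-v^{2}}dv\right].
\]
Splitting each integral at $0$ and invoking definition \eqref{erf} converts everything into $\mathrm{erf}$ evaluated at the four complex points $\pm i/2$ and $\pi/2 \pm i/2$, producing
\[
\frac{\sqrt{\pi}\,e^{-1/4}}{4}\Bigl[\mathrm{erf}(\tfrac{\pi}{2}-\tfrac{i}{2}) + \mathrm{erf}(\tfrac{\pi}{2}+\tfrac{i}{2}) - \mathrm{erf}(\tfrac{i}{2}) - \mathrm{erf}(-\tfrac{i}{2})\Bigr].
\]

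The final step is a cosmetic rewrite using the identities \eqref{irelation} and \eqref{crelation} to match the stated form. For the purely imaginary arguments I would apply $-\mathrm{erf}(\pm i/2) = \mathrm{erfc}(\pm i/2) - 1$, which produces the $\mathrm{erfc}$ terms together with a $-2$. For the terms $\mathrm{erf}(\pi/2 \pm i/2)$ I would write $\pi/2 + i/2 = i(1/2 - i\pi/2)$ and $\pi/2 - i/2 = -i(1/2 + i\pi/2)$, so by \eqref{irelation} they equal $i\,\mathrm{erfi}(1/2 - i\pi/2)$ and $-i\,\mathrm{erfi}(1/2 + i\pi/2)$ respectively. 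Finally I would absorb the stray $-2$ as $i \cdot 2i$ inside the bracket to land exactly on the right-hand side of \eqref{arcsineqn}.

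The computational work is routine; the main obstacle is keeping the bookkeeping of signs, factors of $i$, and the four complex $\mathrm{erf}$-arguments straight, especially when repackaging them into the mixed $\mathrm{erfc}/\mathrm{erfi}$ form required by the statement. A secondary (but trivial) subtlety is justifying the contour shift from the real segment $[0,\pi/2]$ to the shifted segments in the complex plane, which follows from Cauchy's theorem together with the decay of $e^{-v^2}$ on the vertical connecting segments of bounded length.
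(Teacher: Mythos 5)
Your proposal is correct and follows essentially the same route as the paper's proof: substitute $x=\sin(u)$ to get $\int_0^{\pi/2}e^{-u^2}\cos(u)\,du$, apply Euler's formula, complete the square, express the result via $\mathrm{erf}$ at the four points $\pm\tfrac{i}{2}$ and $\tfrac{\pi}{2}\pm\tfrac{i}{2}$, and then repackage using \eqref{irelation}, \eqref{crelation}, and \eqref{odderf}. Your explicit justification of the contour shift is a small bonus the paper leaves implicit, but it is not a different method.
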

\begin{proof}
\[ \int_0^{1} e^{-{\arcsin^2(x)}} dx \xrightarrow{x\rightarrow \sin(x) } \int_0^{\frac{\pi}{2}} e^{-x^2} \cos(x) dx \xrightarrow{Euler's\,\, Formula\,\, \cite{L2017}} \int_0^{\frac{\pi}{2}} e^{-x^2}(\frac{e^{ix}+e^{-ix}}{2}) dx \]
\[=\frac{1}{2} \int_0^{ \frac{\pi}{2}} \left( e^{-(x^2-ix)}+ e^{-(x^2+ix)} \right) dx = \frac{e^{-\frac{1}{4}}}{2} \int_0^{ \frac{\pi}{2}} \left( e^{-(x-\frac{i}{2})^2}+ e^{-(x+\frac{i}{2})^2} \right) dx \]

\[ =\frac{\sqrt{\pi}e^{-\frac{1}{4}}}{4}  \left( erf(x-\frac{i}{2}) + erf(x+\frac{i}{2}) \right) \Big|_0^{\frac{\pi}{2}} = \frac{\sqrt{\pi} e^{-\frac{1}{4}} }{4}  \left( erf(\frac{\pi}{2}-\frac{i}{2}) + erf(\frac{\pi}{2}+\frac{i}{2}) - erf(-\frac{i}{2})-erf(\frac{i}{2}) \right) \]
 Using  $erf(ix) = ierfi(x)$, $erf(-x) = -erf(x)$ and $erfc(x) = 1-erf(x)$ from \eqref{irelation}, \eqref{crelation}  and   \eqref{odderf},  
\[  = \frac{\sqrt{\pi} e^{-\frac{1}{4}} }{4}  \left( - i erfi(\frac{1}{2}+\frac{i \pi}{2}) +i erfi(\frac{1}{2}-\frac{i \pi}{2}) + erfc(-\frac{i}{2})+erfc(\frac{i}{2}) -2 \right)
\]
\[  = \frac{\sqrt{\pi} e^{-\frac{1}{4}} }{4}  \left[ erfc(\frac{i}{2}) + erfc(-\frac{i}{2}) + i \left( erfi(\frac{1}{2}-\frac{i \pi}{2}) -erfi(\frac{1}{2}+\frac{i \pi}{2})+2i \right) \right] \,\,  
\]
\end{proof}
\begin{theorem}
\begin{equation}\label{arccoseqn}
 \int_0^{1} e^{-{\arccos^2(x)}} dx = -\frac{\sqrt{\pi} e^{-\frac{1}{4}}}{4} \left[  erfi(\frac{1}{2}-\frac{i\pi}{2}) + erfi(\frac{1}{2}+\frac{i \pi}{2}) -2erfi(\frac{1}{2}) \right] 
\end{equation}
\end{theorem}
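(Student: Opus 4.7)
The plan is to mirror the proof of the arcsine case (Theorem \ref{arcsineqn}) with the obvious modification that $\cos$ replaces $\sin$. First I would apply the substitution $x \to \cos(u)$ (so $dx = -\sin(u)\,du$, and the limits flip), which transforms the integral into
\[
\int_0^{1} e^{-\arccos^2(x)}\,dx \;=\; \int_0^{\pi/2} e^{-u^2} \sin(u)\,du.
\]
This is the natural inverse-trigonometric analogue of the cosine factor that appeared in Theorem \ref{arcsineqn}.

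Next I would invoke Euler's formula, writing $\sin(u) = \frac{e^{iu}-e^{-iu}}{2i}$, to split the integrand into the two pieces
\[
\frac{1}{2i}\int_0^{\pi/2}\!\!\Bigl(e^{-u^2+iu}-e^{-u^2-iu}\Bigr)\,du.
\]
Completing the square in each exponent gives $-u^2\pm iu=-\bigl(u\mp \tfrac{i}{2}\bigr)^2-\tfrac{1}{4}$, factoring out $e^{-1/4}$ and leading to two shifted Gaussian integrals, each evaluable via \eqref{erf} as $\tfrac{\sqrt{\pi}}{2}\mathrm{erf}(u\mp i/2)$ evaluated between $0$ and $\pi/2$. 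Collecting the boundary contributions produces
\[
\frac{\sqrt{\pi}\,e^{-1/4}}{4i}\Bigl[\mathrm{erf}\bigl(\tfrac{\pi}{2}-\tfrac{i}{2}\bigr)-\mathrm{erf}\bigl(\tfrac{\pi}{2}+\tfrac{i}{2}\bigr)-\mathrm{erf}\bigl(-\tfrac{i}{2}\bigr)+\mathrm{erf}\bigl(\tfrac{i}{2}\bigr)\Bigr].
\]

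The main obstacle is the bookkeeping needed to convert these four $\mathrm{erf}$ values into the $\mathrm{erfi}$ arguments of the stated answer. Using $\mathrm{erf}(iz)=i\,\mathrm{erfi}(z)$ from \eqref{irelation} together with the oddness of $\mathrm{erfi}$ from \eqref{odderfi}, the two pure-imaginary terms combine to $2i\,\mathrm{erfi}(\tfrac{1}{2})$. For the other two, I would write $\tfrac{\pi}{2}\mp\tfrac{i}{2}=i(\mp\tfrac{1}{2}-\tfrac{i\pi}{2})$ (dividing by $i$ inside $\mathrm{erf}$), so that $\mathrm{erf}(\tfrac{\pi}{2}-\tfrac{i}{2})-\mathrm{erf}(\tfrac{\pi}{2}+\tfrac{i}{2})=-i\bigl[\mathrm{erfi}(\tfrac{1}{2}+\tfrac{i\pi}{2})+\mathrm{erfi}(\tfrac{1}{2}-\tfrac{i\pi}{2})\bigr]$ after another use of \eqref{odderfi}. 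Finally, dividing the whole bracket by $i$ yields a real combination and the overall minus sign, matching the claimed identity. I expect the sign and factor-of-$i$ tracking in this last step to be the only real pitfall; the rest of the proof is a mechanical mirror of Theorem \ref{arcsineqn}.
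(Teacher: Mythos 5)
Your proposal is correct and follows essentially the same route as the paper's own proof: the substitution $x\to\cos(u)$, Euler's formula, completing the square to get shifted error functions, and the identical $\mathrm{erf}\to\mathrm{erfi}$ conversion via \eqref{irelation} and oddness. The factor-of-$i$ bookkeeping you describe (including $\tfrac{\pi}{2}\mp\tfrac{i}{2}=i(\mp\tfrac{1}{2}-\tfrac{i\pi}{2})$) is exactly what the paper does implicitly, and your signs check out.
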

\begin{proof}
\[ \int_0^{1} e^{-{\arccos^2(x)}} dx \xrightarrow{x\rightarrow \cos(x) } \int_0^{\frac{\pi}{2}} e^{-x^2} \sin(x) dx \xrightarrow{Euler's  \,\, Formula \,\, \cite{L2017}} \int_0^{\frac{\pi}{2}} e^{-x^2}(\frac{e^{ix}-e^{-ix}}{2i}) dx \]
\[=\frac{1}{2i} \int_0^{ \frac{\pi}{2}} \left( e^{-(x^2-ix)}- e^{-(x^2+ix)} \right) dx = \frac{e^{-\frac{1}{4}}}{2i} \int_0^{ \frac{\pi}{2}} \left( e^{-(x-\frac{i}{2})^2}- e^{-(x+\frac{i}{2})^2} \right) dx \]

\[ =\frac{\sqrt{\pi}e^{-\frac{1}{4}}}{4i}  \left( erf(x-\frac{i}{2}) - erf(x+\frac{i}{2}) \right) \Big|_0^{\frac{\pi}{2}} = \frac{\sqrt{\pi} e^{-\frac{1}{4}} }{4i}  \left( erf(\frac{\pi}{2}-\frac{i}{2}) - erf(\frac{\pi}{2}+\frac{i}{2}) - erf(-\frac{i}{2})+erf(\frac{i}{2}) \right) \]
\end{proof}
  Using $erf(ix) = ierfi(x)$, $erf(-x) = - erf(x)$ from $\eqref{irelation}$  and   $\eqref{odderf}$, 
\[  = \frac{\sqrt{\pi} e^{-\frac{1}{4}} }{4i}  \left( - i erfi(\frac{1}{2}+\frac{i \pi}{2}) -i erfi(\frac{1}{2}-\frac{i \pi}{2}) + ierfi(\frac{1}{2})+ierfi(\frac{1}{2})  \right)
\]
\[  = -\frac{\sqrt{\pi} e^{-\frac{1}{4}} }{4}  \left( erfi(\frac{1}{2}+\frac{i\pi}{2}) + erfi(\frac{1}{2}-\frac{i\pi}{2}) - 2  erfi(\frac{1}{2}) \right) \,\,  
\] 

Next, we take $f(x)$ as inverse hyperbolic functions. For inverse hyperbolic functions, we let the domain of integration go from 0 to   $\infty  $  without any restriction. 

 \begin{theorem}
\begin{equation}\label{arcsinheqn}
 \int_0^{\infty} e^{-arcsinh^2(x)} dx = \frac{\sqrt{\pi}}{2} e^{\frac{1}{4}}   
\end{equation}
\end{theorem}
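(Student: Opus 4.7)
The plan is to use the substitution $u = \operatorname{arcsinh}(x)$, so $x = \sinh(u)$ and $dx = \cosh(u)\,du$. The limits transform from $0,\infty$ to $0,\infty$, and the integral becomes
\[
\int_0^{\infty} e^{-u^2}\cosh(u)\,du.
\]
This is the type of reduction that has already worked for the $\ln$ and $W$ cases in the preceding theorems; the Jacobian here is exactly the clean factor $\cosh(u)$, which is what makes this particular inverse hyperbolic function the easiest of the family.

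Next I would write $\cosh(u) = \tfrac{1}{2}(e^{u}+e^{-u})$, split the integral into two pieces, and complete the square in each exponent using $-u^2\pm u = -(u\mp\tfrac{1}{2})^2 + \tfrac{1}{4}$. Pulling out the common factor $e^{1/4}$ and making the linear shifts $v = u-\tfrac12$ and $v = u+\tfrac12$ respectively, the expression becomes
\[
\frac{e^{1/4}}{2}\left[\int_{-1/2}^{\infty}e^{-v^2}\,dv + \int_{1/2}^{\infty}e^{-v^2}\,dv\right].
\]

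The final step — and what might look like an obstacle, since each shifted Gaussian individually only expresses in terms of $\operatorname{erf}$ — is to observe that by the evenness of $e^{-v^2}$ one has $\int_{-1/2}^{\infty}e^{-v^2}\,dv = \int_{-\infty}^{1/2}e^{-v^2}\,dv$, so the bracketed sum collapses to $\int_{-\infty}^{\infty}e^{-v^2}\,dv = \sqrt{\pi}$. The error function terms therefore cancel exactly (this is the reason the answer has such a clean closed form, unlike the $\arcsin$ and $\arccos$ cases above), and I obtain $\frac{\sqrt{\pi}}{2}e^{1/4}$ as claimed.
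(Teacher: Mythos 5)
Your proposal is correct and follows essentially the same route as the paper: substitute to get $\int_0^\infty e^{-u^2}\cosh(u)\,du$, split $\cosh$, complete the square, and observe that the two shifted tails combine to a full Gaussian. The only cosmetic difference is that you cancel the boundary terms by the evenness of $e^{-v^2}$ directly, whereas the paper writes them as $\operatorname{erf}(\pm\tfrac12)$ and invokes the oddness of $\operatorname{erf}$ — these are the same fact.
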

\begin{proof}
\[ \int_0^{\infty} e^{-arcsinh^2(x)} dx \xrightarrow[arcsinh(x)=\ln(x+\sqrt{1+x^2}) \cite{M2016}]{x\rightarrow \sinh(x)} \int_0^{\infty} e^{-x^2} \cosh(x) dx \xrightarrow{cosh(x)=\frac{e^x+e^{-x}}{2} \cite{MA2016}}  \int_0^{\infty} e^{-x^2}(\frac{e^{x}+e^{-x}}{2}) dx  \]

\[=\frac{1}{2} \int_0^{ \infty} \left( e^{-(x^2-x)}+ e^{-(x^2+x)} \right) dx = \frac{e^{\frac{1}{4}}}{2} \int_0^{ \infty} \left( e^{-(x-\frac{1}{2})^2}+ e^{-(x+\frac{1}{2})^2} \right) dx \]

\[ =\frac{\sqrt{\pi}e^{\frac{1}{4}}}{4}  \left( erf(x-\frac{1}{2}) + erf(x+\frac{1}{2}) \right) \Big|_0^{\infty} = \frac{\sqrt{\pi} e^{\frac{1}{4}} }{4}  \left( (1+1)-[erf(-\frac{1}{2})+erf(\frac{1}{2})] \right) \xrightarrow{using \, \eqref{odderf}} \frac{\sqrt{\pi}}{2}e^{\frac{1}{4}}    \]
\end{proof}

 \begin{theorem}
\begin{equation}\label{arccosheqn}
\int_0^{\infty} e^{-arccosh^2(x)} dx = \frac{\sqrt{\pi}}{4} e^{\frac{1}{4}} \left[ erf(\frac{1}{2}-\frac{i \pi}{2}) + erf(\frac{1}{2}+\frac{i \pi}{2}) \right]     
\end{equation}
\end{theorem}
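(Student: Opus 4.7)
The plan is to follow exactly the pattern of Theorem~\eqref{arcsinheqn}: substitute $x \mapsto \cosh(u)$, which converts the integrand into $e^{-u^2}\sinh(u)$, then split $\sinh(u) = (e^u - e^{-u})/2$ and complete the square in each exponent. The new wrinkle is that $\cosh(u) = 0$ has no real solution, so the lower limit $x=0$ must be tracked through the substitution as $u = i\pi/2$, where $\cosh(i\pi/2) = \cos(\pi/2) = 0$. The upper limit $x=\infty$ still corresponds to $u = \infty$. Since $e^{-u^2}\sinh(u)$ is entire, the resulting contour integral is path-independent; a natural contour runs from $i\pi/2$ down the imaginary axis to $0$ (mirroring $x\in[0,1]$, where $\operatorname{arccosh}(x) = i\arccos(x)$) and then along the real axis out to $+\infty$ (mirroring $x\in[1,\infty)$).

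After the substitution and completion of squares one obtains
\[
\int_{i\pi/2}^{\infty} e^{-u^2}\sinh(u)\,du
= \frac{e^{1/4}}{2}\int_{i\pi/2}^{\infty}\bigl(e^{-(u-\tfrac{1}{2})^2} - e^{-(u+\tfrac{1}{2})^2}\bigr)\,du,
\]
and \eqref{erf} gives the antiderivative $\frac{\sqrt{\pi}\,e^{1/4}}{4}\bigl[\operatorname{erf}(u-\tfrac{1}{2}) - \operatorname{erf}(u+\tfrac{1}{2})\bigr]$. The upper-limit contribution vanishes by \eqref{erfvalues}, and at $u = i\pi/2$ the odd-symmetry identity \eqref{odderf} rewrites $\operatorname{erf}(i\pi/2 - \tfrac{1}{2}) - \operatorname{erf}(i\pi/2 + \tfrac{1}{2})$ as $-\operatorname{erf}(\tfrac{1}{2} - \tfrac{i\pi}{2}) - \operatorname{erf}(\tfrac{1}{2} + \tfrac{i\pi}{2})$. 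Subtracting this lower-limit value from zero delivers the claimed closed form.

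The main obstacle is justifying a substitution whose lower limit is non-real. A fully rigorous alternative is to split $\int_0^\infty = \int_0^1 + \int_1^\infty$ at the branch point $x = 1$: for the second piece the real substitution $x = \cosh(u)$ gives $\int_0^\infty e^{-u^2}\sinh(u)\,du$, while on the first piece the principal-branch identity $\operatorname{arccosh}(x) = i\arccos(x)$ turns the integrand into $e^{+\arccos^2(x)}$, after which $x = \cos(v)$ produces $\int_0^{\pi/2} e^{v^2}\sin(v)\,dv$, to be dispatched with Euler's formula and \eqref{erfi} exactly as in Theorem~\eqref{arccoseqn}. Recombining the two halves and simplifying via \eqref{irelation} and \eqref{odderf} recovers the same formula, certifying the legitimacy of the shorter complex-substitution argument.
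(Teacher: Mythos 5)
Your proof follows essentially the same route as the paper's: the substitution $x=\cosh(u)$ carrying the lower limit to $u=\tfrac{i\pi}{2}$, splitting $\sinh(u)$ into exponentials, completing the square, and evaluating the resulting $erf$ antiderivative at the endpoints via \eqref{erfvalues} and \eqref{odderf}. Your additional justification of the non-real lower limit (splitting the integral at $x=1$ and using $arccosh(x)=i\arccos(x)$ on $[0,1]$) is a rigor-supplying step that the paper itself omits, but the core computation is identical.
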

\begin{proof}
\[ \int_0^{\infty} e^{-arccosh^2(x)} dx \xrightarrow[arccosh(x)=\ln(x+\sqrt{x^2-1}) \cite{M2016}]{x\rightarrow \cosh(x)} \int_{\frac{i \pi}{2}}^{\infty} e^{-x^2} \sinh(x) dx \xrightarrow{sinh(x)=\frac{e^x-e^{-x}}{2} \cite{MA2016}}  \int_{\frac{i\pi}{2}}^{\infty} e^{-x^2}(\frac{e^{x}-e^{-x}}{2}) dx  \]

\[=\frac{1}{2} \int_{\frac{i\pi}{2}}^{ \infty} \left( e^{-(x^2-x)}- e^{-(x^2+x)} \right) dx = \frac{e^{\frac{1}{4}}}{2} \int_{\frac{i\pi}{2}}^{ \infty} \left( e^{-(x-\frac{1}{2})^2}- e^{-(x+\frac{1}{2})^2} \right) dx \]

\[ =\frac{\sqrt{\pi}e^{\frac{1}{4}}}{4}  \left( erf(x-\frac{1}{2}) - erf(x+\frac{1}{2}) \right) \Big|_{\frac{i\pi}{2}}^{\infty} = \frac{\sqrt{\pi} e^{\frac{1}{4}} }{4}  \left( (1-1)-[erf(\frac{i\pi}{2}-\frac{1}{2})-erf(\frac{i\pi}{2}+\frac{1}{2})] \right) \]
\[ \xrightarrow{Using \, \eqref{odderf}} \frac{\sqrt{\pi} e^{\frac{1}{4}} }{4}  \left(erf(\frac{1}{2}-\frac{i \pi}{2})+erf(\frac{i\pi}{2}+\frac{1}{2}) \right)   \]
\end{proof}
\section{Gaussian Like Integral of Type-II}
In this section, we prove various results involving integrals in the form of   $\int_0^{\infty} e^{-x^2}f(x) dx  $. Most of the results are presented in terms of gamma function, $erf(x)$, $erfi(x)$ and $erfc(x)$. Using the Lemmas presented in the previous section, we are going to construct our main results. 

 \begin{theorem}
\begin{equation}
  \int_0^{\infty} e^{-x^2} x^n dx = \frac{1}{2}\Gamma(\frac{n+1}{2})   
\end{equation}
\end{theorem}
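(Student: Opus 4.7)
The plan is to reduce the integral to the Gamma function via a single substitution. Specifically, I would set $u = x^2$, so that $du = 2x\,dx$, which gives $dx = \frac{du}{2\sqrt{u}}$ and $x = u^{1/2}$. The limits $x=0$ and $x=\infty$ map to $u=0$ and $u=\infty$, so no boundary complications arise.

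After substitution, the integrand $e^{-x^2} x^n \, dx$ becomes
\[
e^{-u} u^{n/2} \cdot \frac{du}{2 u^{1/2}} = \frac{1}{2} e^{-u} u^{(n-1)/2} \, du.
\]
Thus the original integral rewrites as
\[
\int_0^{\infty} e^{-x^2} x^n \, dx = \frac{1}{2} \int_0^{\infty} e^{-u} u^{\frac{n+1}{2}-1} \, du,
\]
which matches the form of the Gamma integral in \eqref{Gamma}. Applying the definition directly yields $\frac{1}{2}\Gamma\!\left(\frac{n+1}{2}\right)$, as claimed.

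There is no real obstacle here; the only delicate point is the hypothesis on $n$. The integral converges at infinity for any real $n$ thanks to the $e^{-x^2}$ factor, but at $0$ one needs $n > -1$ so that $x^n$ is integrable near the origin; equivalently, one needs $\frac{n+1}{2} > 0$ for $\Gamma\!\left(\frac{n+1}{2}\right)$ in \eqref{Gamma} to make sense. I would mention this restriction (which is consistent with the definition \eqref{Gamma} that requires a positive argument) and then note that for non-negative integer $n$ the result specializes to the familiar factorial and half-integer Gamma values, recovering for instance $\int_0^\infty e^{-x^2} dx = \frac{1}{2}\Gamma(\tfrac{1}{2}) = \frac{\sqrt{\pi}}{2}$, which agrees with the $n=2$ case of Theorem~\ref{gen}.
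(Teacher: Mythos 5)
Your proof is correct and follows essentially the same route as the paper: the substitution $u=x^2$ reducing the integral to $\frac{1}{2}\int_0^\infty e^{-u}u^{\frac{n+1}{2}-1}\,du$ and then invoking the definition of $\Gamma$. Your added remark that one needs $n>-1$ for convergence near the origin is a worthwhile precision the paper omits.
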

\begin{proof}
 \[ \int_0^{\infty} e^{-x^2} x^n dx \xrightarrow{x^2 \rightarrow x} \frac{1}{2}\int_0^{\infty} e^{-x}x^{\frac{n-1}{2}} dx  \xrightarrow{Using  \eqref{Gamma}} \frac{1}{2} \Gamma\left(\frac{n+1}{2}\right)  \]
\end{proof}

 \begin{theorem}
\begin{equation}
 \int_0^{\infty} e^{-x^2} \ln(x) dx = -\frac{\sqrt{\pi}}{4} \left(\gamma + \ln(4)\right)    
\end{equation}
where   $\gamma  $ is euler-mascheroni constant.
\end{theorem}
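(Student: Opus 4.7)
The plan is to exploit the immediately preceding theorem,
\[
\int_0^\infty e^{-x^2} x^n\,dx \;=\; \tfrac12\,\Gamma\!\left(\tfrac{n+1}{2}\right),
\]
by treating both sides as functions of the parameter $n$ and differentiating to bring $\ln x$ into the integrand. Since $\partial_n x^n = x^n \ln x$ and since $e^{-x^2}(1+|\ln x|)$ is integrable on $(0,\infty)$ (providing a uniform dominant on a neighbourhood of $n=0$), differentiation under the integral sign is legitimate. This yields
\[
\int_0^\infty e^{-x^2} x^n \ln x\,dx \;=\; \tfrac14\,\Gamma'\!\left(\tfrac{n+1}{2}\right),
\]
and setting $n=0$ reduces the problem to computing $\tfrac14\Gamma'(1/2)$.

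To evaluate $\Gamma'(1/2)$, I would write $\Gamma'(s)=\Gamma(s)\psi(s)$, where $\psi$ is the digamma function, so that $\Gamma'(1/2)=\sqrt{\pi}\,\psi(1/2)$, and then invoke the classical value $\psi(1/2)=-\gamma-2\ln 2$. The cleanest derivation is via the logarithmic derivative of the Legendre duplication formula $\Gamma(z)\Gamma(z+\tfrac12)=2^{1-2z}\sqrt{\pi}\,\Gamma(2z)$, which gives $\psi(z)+\psi(z+\tfrac12)=2\psi(2z)-2\ln 2$; specialising to $z=1/2$ and using $\psi(1)=-\gamma$ pins down $\psi(1/2)$. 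Substituting back,
\[
\Gamma'(1/2)\;=\;\sqrt{\pi}\,(-\gamma-2\ln 2)\;=\;-\sqrt{\pi}\,(\gamma+\ln 4),
\]
so the integral equals $-\tfrac{\sqrt{\pi}}{4}(\gamma+\ln 4)$, as asserted.

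The only real obstacle is the evaluation of $\psi(1/2)$: the Laurent expansion of $\Gamma$ recorded earlier in the paper controls $\Gamma$ and its derivatives only in a neighbourhood of $z=0$, while here we need information at $z=1/2$. The duplication formula suffices, but an author wishing to stay closer to the tools already in the paper could instead differentiate the Weierstrass product $1/\Gamma(z)=ze^{\gamma z}\prod_{n\ge 1}(1+z/n)e^{-z/n}$ term by term to obtain $\psi(1/2)=-\gamma-2+\sum_{n\ge 1}\tfrac{1}{n(2n+1)}$; splitting $\tfrac{1}{n(2n+1)}=\tfrac{1}{n}-\tfrac{2}{2n+1}$ and comparing partial sums with $H_N$ and $H_{2N+1}\sim\ln 2+H_N$ shows the series telescopes to $2-2\ln 2$, recovering the same value. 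Everything else in the proof is bookkeeping.
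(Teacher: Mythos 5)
Your proposal is correct and follows essentially the same route as the paper: both reduce the integral to $\tfrac14\Gamma'(1/2)$ and then evaluate it as $\tfrac14\Gamma(1/2)\psi(1/2)$ with $\psi(1/2)=-\gamma-2\ln 2$. The only cosmetic differences are that you obtain the reduction by differentiating the moment formula $\int_0^{\infty}e^{-x^2}x^n\,dx=\tfrac12\Gamma(\tfrac{n+1}{2})$ in $n$ rather than substituting $u=x^2$ and recognizing the integral representation of $\Gamma'$, and that you derive the value of $\psi(1/2)$ (via the duplication formula) where the paper simply cites it.
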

\begin{proof}
 \[ \int_0^{\infty} e^{-x^2} ln(x) dx \xrightarrow{x^2 \rightarrow x} \frac{1}{2}\int_0^{\infty} e^{-x} \ln(x^{\frac{1}{2}}) x^{-\frac{1}{2}} dx =  \frac{1}{4}\int_0^{\infty} e^{-x} \ln(x) x^{\frac{1}{2}-1} dx . \]
 But we have   $ \Gamma(n) = \int_0^{\infty} e^{-x}x^{n-1} dx  $ from   $\cite{SG2002}.  $ So,   $ \Gamma'(n) = \int_0^{\infty} e^{-x} x^{n-1} \ln(x) dx .  $ Thus required integral is   $\frac{1}{4} \Gamma'(\frac{1}{2})  $. From   $\cite{W2002}  $, we have   $\Gamma'(x)=\Gamma(x)\psi(x)  $. So, the result is   $\frac{1}{4} \Gamma(\frac{1}{2}) \psi(\frac{1}{2})  $. Again, from   $\cite{A2010}  $, we have   $\Gamma(\frac{1}{2})=\sqrt{\pi}  $ and   $\psi(\frac{1}{2})=(-\gamma-2\ln(2)).  $ Thus, the result   $-\frac{\sqrt{\pi}}{4}(\gamma+2\ln(2))$ is proved.    \\
\end{proof}

 Next, we take $f(x)$ as trigonometric functions without restricting the domain of integration. 
 
 \begin{theorem}
\begin{equation}
 \int_0^{\infty} e^{-x^2} cos(x) dx = \frac{\sqrt{\pi}}{2} e^{-\frac{1}{4}}     
\end{equation}
\end{theorem}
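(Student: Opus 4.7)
The plan is to mimic the Euler-formula-plus-completing-the-square strategy that worked so well for the $\arcsin$ and $\arccos$ integrals (theorems \eqref{arcsineqn} and \eqref{arccoseqn}), just in the easier setting where the upper limit is $\infty$ rather than $\pi/2$. First I would write $\cos(x) = \tfrac{1}{2}(e^{ix}+e^{-ix})$, split the integral into two pieces, and in each piece complete the square in the exponent: $-x^2 + ix = -(x - i/2)^2 - 1/4$ and $-x^2 - ix = -(x + i/2)^2 - 1/4$. This pulls out a common factor of $e^{-1/4}$ and leaves me with
\[
\int_0^\infty e^{-x^2}\cos(x)\,dx \;=\; \frac{e^{-1/4}}{2}\int_0^\infty \bigl(e^{-(x-i/2)^2} + e^{-(x+i/2)^2}\bigr)\,dx.
\]

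Next I would invoke the definition \eqref{erf} to recognize the antiderivative as $\tfrac{\sqrt{\pi}}{2}\,\mathrm{erf}(x\pm i/2)$, giving
\[
\frac{\sqrt{\pi}\,e^{-1/4}}{4}\bigl[\mathrm{erf}(x-i/2) + \mathrm{erf}(x+i/2)\bigr]_0^{\infty}.
\]
At the upper limit both error functions tend to $1$ (by analyticity of $e^{-z^2}$ and its decay), contributing $2$. At the lower limit I get $\mathrm{erf}(-i/2) + \mathrm{erf}(i/2)$, which vanishes because $\mathrm{erf}$ is odd by \eqref{odderf}; equivalently, by \eqref{irelation} the two boundary terms are $-i\,\mathrm{erfi}(1/2) + i\,\mathrm{erfi}(1/2) = 0$. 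What remains is $\frac{\sqrt{\pi}\,e^{-1/4}}{4}\cdot 2 = \frac{\sqrt{\pi}}{2} e^{-1/4}$, which is exactly the claimed value.

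The step requiring the most care is justifying $\mathrm{erf}(x \pm i/2) \to 1$ as the real variable $x \to \infty$, since the antiderivative is being evaluated along a horizontal line shifted off the real axis. Since $e^{-z^2}$ is entire and decays rapidly in any horizontal strip of bounded width, a standard contour-shift argument (or direct appeal to the known asymptotics of $\mathrm{erf}$) handles this cleanly, and all the imaginary contributions cancel in the sum $\mathrm{erf}(x-i/2)+\mathrm{erf}(x+i/2)$ by complex conjugation, so no subtle phase issue arises. Everything else is routine manipulation parallel to the $\arcsin$ proof above.
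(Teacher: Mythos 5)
Your proposal is correct and follows essentially the same route as the paper's own proof: Euler's formula, completing the square to extract $e^{-1/4}$, writing the antiderivative via \eqref{erf}, and cancelling the boundary terms $erf(\pm i/2)$ by the oddness relation \eqref{odderf}. Your added remark about justifying $erf(x\pm i/2)\to 1$ along a horizontal line off the real axis is a point the paper silently assumes, so it strengthens rather than changes the argument.
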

\begin{proof}
 \[ \int_0^{\infty} e^{-x^2} cos(x) dx \xrightarrow{ Euler's\,\, Formula \,\, \cite{L2017}}  \int_0^{\infty} e^{-x^2}(\frac{e^{ix}+e^{-ix}}{2}) dx  \]
 \[=\frac{1}{2} \int_0^{ \infty} \left( e^{-(x^2-ix)}+ e^{-(x^2+ix)} \right) dx = \frac{e^{-\frac{1}{4}}}{2} \int_0^{ \infty} \left( e^{-(x-\frac{i}{2})^2}+ e^{-(x+\frac{i}{2})^2} \right) dx \]

\[ =\frac{\sqrt{\pi}e^{-\frac{1}{4}}}{4}  \left( erf(x-\frac{i}{2}) + erf(x+\frac{i}{2}) \right) \Big|_0^{\infty} \] 
From \eqref{odderf} and   \eqref{erfvalues}, we have $erf(-x) = -erf(x)$  and     $erf(\infty)  = 1$ 
 \[\frac{\sqrt{\pi} e^{-\frac{1}{4}} }{4}  \left( 1+ 1 - erf(-\frac{i}{2})-erf(\frac{i}{2}) \right) = \frac{\sqrt{\pi} e^{-\frac{1}{4}} }{4}  \left( 2 + erf(\frac{i}{2})-erf(\frac{i}{2}) \right) = \frac{\sqrt{\pi} e^{-\frac{1}{4}} }{2}   \]
\end{proof}
 \begin{theorem}
\begin{equation}
 \int_0^{\infty} e^{-x^2} sin(x) dx = \frac{\sqrt{\pi}}{2} e^{-\frac{1}{4}} erfi(\frac{1}{2})    
\end{equation}
\end{theorem}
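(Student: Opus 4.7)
The plan is to mirror the proof of the preceding cosine theorem almost verbatim, with the only modification being the sign pattern inherited from Euler's formula for sine. First I would write $\sin(x) = (e^{ix} - e^{-ix})/(2i)$ so the integrand splits into the \emph{difference} (rather than sum) of two shifted Gaussians, with an overall factor of $1/(2i)$ out front. Completing the square in each exponent then yields
\[
 \int_0^{\infty} e^{-x^2}\sin(x)\,dx = \frac{e^{-\frac{1}{4}}}{2i} \int_0^{\infty} \left( e^{-(x-\frac{i}{2})^2} - e^{-(x+\frac{i}{2})^2} \right) dx,
\]
and each of these two integrals can be converted into an error function via definition \eqref{erf}, producing $\frac{\sqrt{\pi}e^{-1/4}}{4i} \left[ erf(x-\tfrac{i}{2}) - erf(x+\tfrac{i}{2}) \right]_0^{\infty}$.

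Next I would evaluate at the limits. Using $erf(\infty)=1$ from \eqref{erfvalues} for both shifted upper limits, the contributions at infinity cancel exactly, leaving only the boundary term at $0$, namely $-\bigl(erf(-\tfrac{i}{2}) - erf(\tfrac{i}{2})\bigr)$. Applying the oddness of $erf$ from \eqref{odderf}, this collapses to $2\,erf(\tfrac{i}{2})$. Finally the identity $erf(ix) = i\,erfi(x)$ from \eqref{irelation} converts this to $2i\,erfi(\tfrac{1}{2})$, and the $i$ cancels against the $1/i$ in the prefactor $1/(4i)$, delivering the claimed closed form $\tfrac{\sqrt{\pi}}{2} e^{-\frac{1}{4}} erfi(\tfrac{1}{2})$.

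The main obstacle is really just careful sign bookkeeping: compared with the cosine case, we pick up a minus sign between the two shifted Gaussians \emph{and} an extra factor of $1/i$ out front, and these two alterations interact with the oddness of $erf$ and with $erf(ix)=i\,erfi(x)$ in ways that almost invite a sign error. No genuinely new techniques beyond those used for $\int_0^\infty e^{-x^2}\cos(x)\,dx$ are needed; the proof is essentially a transcription of that argument with these sign adjustments tracked faithfully through each line.
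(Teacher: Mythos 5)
Your proposal is correct and follows essentially the same route as the paper's own proof: Euler's formula, completing the square in each shifted Gaussian, converting to $erf$, and then simplifying the boundary terms with $erf(\infty)=1$, the oddness of $erf$, and $erf(ix)=i\,erfi(x)$. The sign bookkeeping you describe matches the paper's computation exactly and yields the stated result.
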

\begin{proof}
 \[ \int_0^{\infty} e^{-x^2} sin(x) dx \xrightarrow{ Euler's\,\, Formula\,\, \cite{L2017}}  \int_0^{\infty} e^{-x^2}(\frac{e^{ix}-e^{-ix}}{2i}) dx  \]
 \[=\frac{1}{2i} \int_0^{ \infty} \left( e^{-(x^2-ix)}- e^{-(x^2+ix)} \right) dx = \frac{e^{-\frac{1}{4}}}{2i} \int_0^{ \infty} \left( e^{-(x-\frac{i}{2})^2}- e^{-(x+\frac{i}{2})^2} \right) dx \]

\[ =\frac{\sqrt{\pi}e^{-\frac{1}{4}}}{4i}  \left( erf(x-\frac{i}{2}) - erf(x+\frac{i}{2}) \right) \Big|_0^{\infty} \] 
From \eqref{irelation},\eqref{odderf} and   \eqref{erfvalues}, we have $erf(ix) = ierfi(x)$, $erf(-x) = -erf(x)$ and $erf(\infty) = 1 $ 
 \[\frac{\sqrt{\pi} e^{-\frac{1}{4}} }{4i}  \left( 1- 1 - erf(-\frac{i}{2})+erf(\frac{i}{2}) \right) = \frac{\sqrt{\pi} e^{-\frac{1}{4}} }{4i}\times 2 ierfi(\frac{1}{2}) = \frac{\sqrt{\pi} e^{-\frac{1}{4}} }{2} erfi(\frac{1}{2})   \]
\end{proof}
 Next, we take $f(x)$ as hyperbolic functions without restricting the domain of integration. 
 
 \begin{theorem}
\begin{equation}
 \int_0^{\infty} e^{-x^2} cosh(x) dx = \frac{\sqrt{\pi}}{2} e^{\frac{1}{4}}    
\end{equation}
\end{theorem}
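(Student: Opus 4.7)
The plan is to mirror the cosine computation done earlier in the Type-II section, replacing Euler's formula with the hyperbolic identity $\cosh(x)=\tfrac{e^x+e^{-x}}{2}$. This turns the integrand into a sum of two Gaussian exponentials whose exponents are linear in $x$, so the natural next move is to complete the square and reduce to shifted Gaussian integrals expressible via $\mathrm{erf}$.

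Concretely, I would first write
\[
\int_0^{\infty} e^{-x^2}\cosh(x)\,dx = \frac{1}{2}\int_0^{\infty}\left(e^{-(x^2-x)}+e^{-(x^2+x)}\right)dx,
\]
then use $x^2\mp x=(x\mp\tfrac12)^2-\tfrac14$ to pull out a factor of $e^{1/4}$, giving
\[
\frac{e^{1/4}}{2}\int_0^{\infty}\left(e^{-(x-\frac{1}{2})^2}+e^{-(x+\frac{1}{2})^2}\right)dx.
\]
Applying the definition \eqref{erf} yields
\[
\frac{\sqrt{\pi}\,e^{1/4}}{4}\Bigl(\mathrm{erf}(x-\tfrac12)+\mathrm{erf}(x+\tfrac12)\Bigr)\Big|_0^{\infty}.
\]

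For the evaluation at the endpoints I would use $\mathrm{erf}(\infty)=1$ from \eqref{erfvalues} to handle the upper limit, producing $1+1=2$, and use the oddness $\mathrm{erf}(-x)=-\mathrm{erf}(x)$ from \eqref{odderf} at the lower limit, so that $\mathrm{erf}(-\tfrac12)+\mathrm{erf}(\tfrac12)=0$. This leaves exactly $\frac{\sqrt{\pi}\,e^{1/4}}{4}\cdot 2=\frac{\sqrt{\pi}}{2}e^{1/4}$, matching the claim.

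Compared to the $\cos$ analogue earlier in the section, the main structural point is that the $\cosh$ version produces \emph{real} shifts $\pm\tfrac12$ rather than imaginary shifts $\pm\tfrac{i}{2}$. This is actually the easier case: there is no need to invoke the relation \eqref{irelation} connecting $\mathrm{erf}$ to $\mathrm{erfi}$, because the cancellation happens directly through the real oddness of $\mathrm{erf}$. I do not expect a genuine obstacle here; the only step that requires a little care is keeping track of signs when applying \eqref{odderf} to the lower limit, but no deeper technique than completing the square plus the previously established lemmas is needed.
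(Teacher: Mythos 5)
Your proposal is correct and follows essentially the same route as the paper's own proof: expand $\cosh$ into exponentials, complete the square to extract $e^{1/4}$, and evaluate the resulting shifted Gaussians via \eqref{erf}, \eqref{erfvalues}, and the oddness relation \eqref{odderf}. No meaningful differences to report.
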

\begin{proof}
 \[ \int_0^{\infty} e^{-x^2} cosh(x) dx \xrightarrow{ \cosh(x)=\frac{e^x+e^{-x}}{2} \cite{MA2016}}  \int_0^{\infty} e^{-x^2}(\frac{e^{x}+e^{-x}}{2}) dx  \]
 \[=\frac{1}{2} \int_0^{ \infty} \left( e^{-(x^2-x)}+ e^{-(x^2+x)} \right) dx = \frac{e^{\frac{1}{4}}}{2} \int_0^{ \infty} \left( e^{-(x-\frac{1}{2})^2}+ e^{-(x+\frac{1}{2})^2} \right) dx \]

\[ =\frac{\sqrt{\pi}e^{\frac{1}{4}}}{4}  \left( erf(x-\frac{1}{2}) + erf(x+\frac{1}{2}) \right) \Big|_0^{\infty} \] 
From \eqref{odderf} and   \eqref{erfvalues}, we have erf(-x)=-erf(x)  and     $erf(\infty)  $=1 
 \[\frac{\sqrt{\pi} e^{\frac{1}{4}} }{4}  \left( 1+ 1 - erf(-\frac{1}{2})-erf(\frac{1}{2}) \right) = \frac{\sqrt{\pi} e^{\frac{1}{4}} }{4}  \left( 2 + erf(\frac{1}{2})-erf(\frac{1}{2}) \right) = \frac{\sqrt{\pi} e^{\frac{1}{4}} }{2}   \]
\end{proof}
 \begin{theorem}
\begin{equation}
 \int_0^{\infty} e^{-x^2} sinh(x) dx = \frac{\sqrt{\pi}}{2} e^{\frac{1}{4}} erf(\frac{1}{2})    
\end{equation}
\end{theorem}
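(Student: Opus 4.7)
The plan is to follow the template already established for $\cos$, $\sin$, and especially $\cosh$ in the preceding theorems. First I would invoke the identity $\sinh(x) = \tfrac{1}{2}(e^x - e^{-x})$ to rewrite the integrand as $\tfrac{1}{2}\bigl(e^{-(x^2-x)} - e^{-(x^2+x)}\bigr)$, splitting the integral into a difference of two Gaussian-type integrals with linear terms in the exponent.

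Next I would complete the square in each exponent via $-(x^2 \mp x) = -(x \mp \tfrac{1}{2})^2 + \tfrac{1}{4}$, factor out the common $e^{1/4}$, and recognize each piece as a shifted Gaussian. Using definition \eqref{erf}, these antidifferentiate to $\tfrac{\sqrt{\pi}}{2}\operatorname{erf}(x \mp \tfrac{1}{2})$, so the full antiderivative becomes $\tfrac{\sqrt{\pi} e^{1/4}}{4}\bigl[\operatorname{erf}(x-\tfrac{1}{2}) - \operatorname{erf}(x+\tfrac{1}{2})\bigr]$ to be evaluated from $0$ to $\infty$.

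Finally I would substitute the limits. Using $\operatorname{erf}(\infty)=1$ from \eqref{erfvalues}, the upper contribution collapses as $1-1=0$, while the lower contribution gives $-\bigl[\operatorname{erf}(-\tfrac{1}{2}) - \operatorname{erf}(\tfrac{1}{2})\bigr] = 2\operatorname{erf}(\tfrac{1}{2})$ by the odd-function property \eqref{odderf}. Combining these yields the claimed value $\tfrac{\sqrt{\pi}}{2} e^{1/4}\operatorname{erf}(\tfrac{1}{2})$.

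There is no substantive obstacle here: the argument is essentially identical to the $\cosh$ theorem, the one structural difference being that the minus sign in the $\sinh$ expansion produces a difference (rather than a sum) of error functions, so that the constants from the upper limit cancel and the nontrivial contribution comes entirely from the lower limit. The only point that warrants attention is careful sign bookkeeping and applying the odd symmetry of $\operatorname{erf}$ in the correct direction.
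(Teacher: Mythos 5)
Your proposal is correct and follows essentially the same route as the paper's own proof: expand $\sinh(x)$ as $\tfrac{1}{2}(e^{x}-e^{-x})$, complete the square, antidifferentiate into a difference of error functions, and evaluate the limits using $\operatorname{erf}(\infty)=1$ and the odd symmetry of $\operatorname{erf}$. In fact your bookkeeping is cleaner than the paper's: the printed proof carries an erroneous $e^{-\frac{1}{4}}$ prefactor through its final lines (a sign typo in the exponent), whereas your version correctly retains $e^{\frac{1}{4}}$ in agreement with the stated theorem.
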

\begin{proof}
 \[ \int_0^{\infty} e^{-x^2} sinh(x) dx \xrightarrow{ \sinh(x)=\frac{e^x+e^{-x}}{2} \cite{MA2016}}  \int_0^{\infty} e^{-x^2}(\frac{e^{x}-e^{-x}}{2}) dx  \]
 \[=\frac{1}{2} \int_0^{ \infty} \left( e^{-(x^2-x)}- e^{-(x^2+x)} \right) dx = \frac{e^{\frac{1}{4}}}{2} \int_0^{ \infty} \left( e^{-(x-\frac{1}{2})^2}- e^{-(x+\frac{1}{2})^2} \right) dx \]

\[ =\frac{\sqrt{\pi}e^{-\frac{1}{4}}}{4}  \left( erf(x-\frac{1}{2}) - erf(x+\frac{1}{2}) \right) \Big|_0^{\infty} \] 
From \eqref{odderf} and   \eqref{erfvalues}, we have erf(-x)=-erf(x)  and     $erf(\infty)  $=1 
 \[\frac{\sqrt{\pi} e^{-\frac{1}{4}} }{4}  \left( 1- 1 - erf(-\frac{1}{2})+erf(\frac{1}{2}) \right) = \frac{\sqrt{\pi} e^{-\frac{1}{4}} }{4}\times 2 erf(\frac{1}{2}) = \frac{\sqrt{\pi} e^{-\frac{1}{4}} }{2} erf(\frac{1}{2})   \]
 \end{proof}

 To make this more interesting, next, we take $f(x)$ as error functions themselves letting the domain of integration from 0 to   $\infty $.
 
\begin{theorem}
\begin{equation}
\int_0^{\infty} e^{-x^2} erf(x) dx = \frac{\sqrt{\pi}}{4}   
\end{equation}
\end{theorem}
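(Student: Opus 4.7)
The key observation is that the integrand has the form $g(x) \cdot g'(x)$ up to a multiplicative constant, since $\frac{d}{dx}\operatorname{erf}(x) = \frac{2}{\sqrt{\pi}} e^{-x^2}$ by the definition \eqref{erf}. This immediately suggests the substitution $u = \operatorname{erf}(x)$, reducing the problem to integrating $u\,du$.

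The plan is as follows. First I would rewrite the integrand by noting that $e^{-x^2}\,dx = \frac{\sqrt{\pi}}{2}\, d[\operatorname{erf}(x)]$, so that
\[
\int_0^{\infty} e^{-x^2} \operatorname{erf}(x)\, dx = \frac{\sqrt{\pi}}{2} \int_0^{\infty} \operatorname{erf}(x)\, d[\operatorname{erf}(x)].
\]
Then I would perform the substitution $u = \operatorname{erf}(x)$, which maps the limits $x=0$ and $x=\infty$ to $u=0$ and $u=1$ respectively, using the values $\operatorname{erf}(0)=0$ and $\operatorname{erf}(\infty)=1$ from \eqref{erfvalues}. This converts the integral to $\frac{\sqrt{\pi}}{2}\int_0^1 u\, du = \frac{\sqrt{\pi}}{2}\cdot \frac{1}{2} = \frac{\sqrt{\pi}}{4}$.

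Equivalently, one may expand $\operatorname{erf}(x)$ via its integral definition to obtain the iterated integral
\[
\frac{2}{\sqrt{\pi}} \int_0^{\infty} e^{-x^2} \int_0^{x} e^{-t^2}\, dt\, dx,
\]
recognize the inner integral as an antiderivative of $e^{-x^2}$, and conclude by the fundamental theorem of calculus applied to $\tfrac{1}{2}\bigl(\int_0^x e^{-t^2} dt\bigr)^2$. Either route is a short, direct application of the chain rule.

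There is no real obstacle here; the only minor subtlety is justifying the evaluation at infinity, which is handled by the well-known limit $\operatorname{erf}(\infty)=1$ already recorded in \eqref{erfvalues}. The proof is essentially a two-line computation once the substitution is spotted.
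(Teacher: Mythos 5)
Your proof is correct, but it takes a genuinely different route from the paper. You exploit the fact that $e^{-x^2}$ is, up to the constant $\frac{\sqrt{\pi}}{2}$, the derivative of $erf(x)$, so the integrand is of the form $g\,g'$ and the substitution $u=erf(x)$ collapses the problem to $\frac{\sqrt{\pi}}{2}\int_0^1 u\,du=\frac{\sqrt{\pi}}{4}$, using only $erf(0)=0$ and $erf(\infty)=1$ from \eqref{erfvalues}. The paper instead expands $erf(x)$ by its integral definition \eqref{erf}, rescales the inner variable via $t\mapsto xt$ to make the region of integration a product region, applies Fubini's theorem, and is led to $\frac{1}{\sqrt{\pi}}\int_0^1\frac{dt}{1+t^2}=\frac{1}{\sqrt{\pi}}\cdot\frac{\pi}{4}$. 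Your argument is shorter and more elementary --- essentially the fundamental theorem of calculus applied to $\frac{1}{2}\bigl(\int_0^x e^{-t^2}dt\bigr)^2$ --- and it dispatches the companion result $\int_0^{\infty}e^{-x^2}erfc(x)\,dx=\frac{\sqrt{\pi}}{4}$ just as quickly via $erfc=1-erf$. The paper's Fubini-and-rescale technique is heavier here but is the one that survives when the two Gaussian factors carry different parameters (e.g.\ integrands like $e^{-a x^2}erf(bx)$), where the self-derivative structure you rely on is no longer available; it is also the uniform method the paper reuses for the $erfc$ case and for the $\tan^2$ integral earlier in Section 3. Both proofs are valid.
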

\begin{proof}
 \[ \int_0^{\infty} e^{-x^2} erf(x) dx \xrightarrow{From \, \eqref{erf}} \frac{2}{\sqrt{\pi}} \int_0^{\infty} e^{-x^2} \int_0^x e^{-t^2} dt dx \xrightarrow{\frac{t}{x}\rightarrow t} \frac{2}{\sqrt{\pi}} \int_0^{\infty} e^{-x^2} \int_0^1 e^{-(xt)^2} x dt dx \]
 \[ =\frac{2}{\sqrt{\pi}} \int_0^{\infty}\int_0^1 e^{-x^2(1+t^2)}x dt dx \xrightarrow{Using\,\, Fubini's\,\, theorem} \frac{1}{\sqrt{\pi}}\int_0^1\int_0^{\infty} e^{-x^2(1+t^2)}2x dx dt\]
 \[= \frac{1}{\sqrt{\pi}} \int_0^1 \frac{e^{-x^2(1+t^2)}}{-(1+t^2)} \Big|_0^{\infty} dt = \frac{1}{\sqrt{\pi}} \int_0^1 \frac{1}{1+t^2} dt = \frac{1}{\sqrt{\pi}} \arctan(t)\Big|_0^1 = \frac{1}{\sqrt{\pi}} \times \frac{\pi}{4}= \frac{\sqrt{\pi}}{4}    \]
 \end{proof}
 \begin{theorem}
\begin{equation}
 \int_0^{\infty} e^{-x^2} erfc(x) dx = \frac{\sqrt{\pi}}{4}    
\end{equation}
\end{theorem}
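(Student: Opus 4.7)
The most economical plan is to piggyback on the previous theorem (the evaluation of $\int_0^{\infty} e^{-x^2}\operatorname{erf}(x)\,dx = \frac{\sqrt{\pi}}{4}$) together with the complementary identity $\operatorname{erf}(x)+\operatorname{erfc}(x)=1$ from \eqref{crelation}. The strategy is to write
\[
\int_0^{\infty} e^{-x^2}\operatorname{erfc}(x)\,dx = \int_0^{\infty} e^{-x^2}\bigl(1-\operatorname{erf}(x)\bigr)\,dx = \int_0^{\infty} e^{-x^2}\,dx - \int_0^{\infty} e^{-x^2}\operatorname{erf}(x)\,dx,
\]
then substitute the classical value $\int_0^{\infty} e^{-x^2}\,dx = \frac{\sqrt{\pi}}{2}$ and the previous theorem's value $\frac{\sqrt{\pi}}{4}$ to obtain $\frac{\sqrt{\pi}}{2}-\frac{\sqrt{\pi}}{4}=\frac{\sqrt{\pi}}{4}$. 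This approach is essentially arithmetic once the two ingredients are in place, and it matches the style of the paper, which has consistently reused \eqref{crelation} to translate between $\operatorname{erf}$ and $\operatorname{erfc}$.

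For readers who prefer a self-contained derivation that parallels the Fubini argument used for $\operatorname{erf}$, I would also sketch an alternative route. Expand the definition in \eqref{erfc} to get a double integral $\frac{2}{\sqrt{\pi}}\int_0^{\infty}\int_x^{\infty} e^{-x^2}e^{-t^2}\,dt\,dx$, and switch the order of integration so that $t$ ranges over $(0,\infty)$ while $x$ ranges over $(0,t)$. The inner $x$-integral yields $\frac{\sqrt{\pi}}{2}\operatorname{erf}(t)$, reducing the problem to $\int_0^{\infty} e^{-t^2}\operatorname{erf}(t)\,dt$, which by the previous theorem equals $\frac{\sqrt{\pi}}{4}$. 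Either version reaches the same answer.

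Since both ingredients are already proved earlier in the paper, there is essentially no obstacle; the only mild care needed is to justify the interchange of limits (Fubini applies because the integrand is non-negative on the region of integration), and to invoke $\int_0^{\infty} e^{-x^2}\,dx = \frac{\sqrt{\pi}}{2}$ from the case $n=2$ of \eqref{gen}. I would present the short subtraction proof as the main argument and optionally remark that the direct Fubini approach is available as a cross-check.
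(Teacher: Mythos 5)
Your proposal is correct, but it takes a genuinely different route from the paper. The paper does not use the complementary identity at all here: it repeats, almost verbatim, the argument it gave for $\int_0^{\infty}e^{-x^2}\operatorname{erf}(x)\,dx$ --- expand $\operatorname{erfc}$ as $\frac{2}{\sqrt{\pi}}\int_x^{\infty}e^{-t^2}\,dt$, rescale $t\to xt$ so the inner variable runs over $(1,\infty)$, apply Fubini, and reduce to $\frac{1}{\sqrt{\pi}}\int_1^{\infty}\frac{dt}{1+t^2}=\frac{1}{\sqrt{\pi}}\cdot\frac{\pi}{4}$. That self-contained computation has the aesthetic payoff of showing \emph{why} the two integrals coincide: the $\operatorname{erf}$ case produces $\arctan t$ over $(0,1)$ and the $\operatorname{erfc}$ case produces it over $(1,\infty)$, each contributing $\pi/4$. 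Your main argument --- writing $\operatorname{erfc}=1-\operatorname{erf}$, invoking $\int_0^{\infty}e^{-x^2}\,dx=\frac{\sqrt{\pi}}{2}$ and the previous theorem, and subtracting --- is shorter, avoids any interchange of integrals, and arguably better exploits the machinery the paper has already built (it is exactly how the paper uses \eqref{crelation} elsewhere). Your sketched alternative (swap the order over the region $0<x<t$ to land back on $\int_0^{\infty}e^{-t^2}\operatorname{erf}(t)\,dt$) is also valid and is closer in spirit to the paper's Fubini computation, though still not identical to it. All three routes are sound; the paper's buys independence from the preceding theorem, yours buys brevity.
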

\begin{proof}
 \[ \int_0^{\infty} e^{-x^2} erfc(x) dx \xrightarrow{From \, \eqref{erf}} \frac{2}{\sqrt{\pi}} \int_0^{\infty} e^{-x^2} \int_x^{\infty} e^{-t^2} dt dx \xrightarrow{\frac{t}{x}\rightarrow t} \frac{2}{\sqrt{\pi}} \int_0^{\infty} e^{-x^2} \int_1^{\infty} e^{-(xt)^2} x dt dx \]
 \[ =\frac{2}{\sqrt{\pi}} \int_0^{\infty}\int_1^{\infty} e^{-x^2(1+t^2)}x dt dx \xrightarrow{Using\,\, Fubini's\,\, theorem} \frac{1}{\sqrt{\pi}}\int_1^{\infty}\int_0^{\infty} e^{-x^2(1+t^2)}2x dx dt\]
 \[= \frac{1}{\sqrt{\pi}} \int_1^{\infty} \frac{e^{-x^2(1+t^2)}}{-(1+t^2)} \Big|_0^{\infty} dt = \frac{1}{\sqrt{\pi}} \int_1^{\infty} \frac{1}{1+t^2} dt = \frac{1}{\sqrt{\pi}} \arctan(t)\Big|_1^{\infty} = \frac{1}{\sqrt{\pi}} \times \frac{\pi}{4}= \frac{\sqrt{\pi}}{4}    \]
\end{proof}
 \section{Miscellaneous Results on other Generalizations of Gaussian Integrals}
 For the sake of completeness, we also evaluate another kind of generalized Gaussian Integral in this paper. 
 \textbf{Remarks: }
 For   $a>0  $
 \begin{equation}\label{general}
 \int_0^{\infty} e^{-(ax^2+bx+c)} dx= \frac{e^{\frac{b^2-4ac}{4a}}}{\sqrt{a}} erfc\left({\frac{b\sqrt{a}}{2a}}\right)  
 \end{equation}

 \begin{proof}
  \[ \int_0^{\infty} e^{-(ax^2+bx+c)} dx = \int_0^{\infty} e^{-[a((x+\frac{b}{2a})^2+ \frac{c}{a}- \frac{b^2}{4a^2})]} dx = \int_0^{\infty} e^{-a((x+\frac{b}{2a})^2+ \frac{4ac-b^2}{4a^2})} dx \]
  \[  = e^{\frac{b^2-4ac}{4a}}\int_0^{\infty} e^{-a(x+\frac{b}{2a})^2} dx \xrightarrow{(x+\frac{b}{2a})\rightarrow x}  e^{\frac{b^2-4ac}{4a}}\int_{\frac{b}{2a}}^{\infty} e^{-ax^2} dx \xrightarrow{\sqrt{a}x\rightarrow x} \frac{e^{\frac{b^2-4ac}{4a}}}{\sqrt{a}}\int_{\frac{b\sqrt{a}}{2a}}^{\infty} e^{-x^2} dx     \]
  \[ = \frac{ \sqrt{\pi}e^{\frac{b^2-4ac}{4a}}}{2\sqrt{a}} erf(x)\Big|_{\frac{b\sqrt{a}}{2a}}^{\infty}  =  \frac{\sqrt{\pi}e^{\frac{b^2-4ac}{4a}}}{2\sqrt{a}} erfc({\frac{b\sqrt{a}}{2a}})   \]
  \end{proof}
  Substituting $b = c = 0$ in \eqref{general} gives a well-known special case. \\
  \begin{equation}\label{specialgeneral}
   \int_0^{\infty} e^{-ax^2} dx = \frac{\sqrt{\pi}e^0}{2\sqrt{a}} erfc(0) = \frac{1}{2} \sqrt{\frac{\pi}{a}} 
  \end{equation}

\section{Conclusions} In this paper, we have systematically analyzed and evaluated several generalized Gaussian integrals and Gaussian-like integrals of types I and II. By leveraging specialized functions such as the error function, complementary error function, and imaginary error function, we derived a variety of useful results that extend the classical Gaussian integral's scope. These integrals demonstrate applications in diverse fields, including probability theory, statistical mechanics, and quantum mechanics, thereby emphasizing their profound mathematical and practical significance.

Our study also highlighted the versatility of Gaussian-related integrals when generalized to involve continuous functions or higher-order powers. The mathematical techniques employed here, including real and complex analysis, are crucial for further exploration of these integrals and their applications.

We encourage interested readers to explore these results further and investigate additional integrals. For instance, one might consider cases where the function is gamma function $f(x) =  \Gamma(x)$, digamma function $f(x) = \psi(x)$, zeta function $f(x) = \zeta(x)$, poly-logarithm function $f(x) = Li_2(x) $ or any other special functions within the Gaussian-Like integrals.The results presented here serve as a foundation for such endeavors, offering a robust framework for both theoretical investigations and practical implementations.


\begin{thebibliography}{99}

\bibitem{G1809} Gauss, C. F. (1809). \textit{Theoria Motus Corporum Coelestium in Sectionibus Conicis Solem Ambientium} [The theory of the motion of heavenly bodies moving about the sun in conic sections]. Hamburg: Friedrich Perthes and I. H. Besser.

\bibitem{G1823} Gauss, C. F. (1823). \textit{Theoria Combinationis Observationum Erroribus Minimis Obnoxiae} [Theory of the combination of observations least subject to error]. Göttingen: Dieterich.

\bibitem{L1812} Laplace, P. S. (1812). \textit{Théorie Analytique des Probabilités}.

\bibitem{F1922} Fisher, R. A. (1922). On the mathematical foundations of theoretical statistics. \textit{Philosophical Transactions of the Royal Society of London. Series A, Containing Papers of a Mathematical or Physical Character, 222}(594-604), 309–368.

\bibitem{S1986} Stigler, S. M. (1986). \textit{The history of statistics: The measurement of uncertainty before 1900}. Harvard University Press.

\bibitem{C2016} Conrad, K. (2016). \textit{The Gaussian integral}. University of Connecticut: Storrs, CT, USA.

\bibitem{Dnd} Dominy, J. (n.d.). \textit{Gaussian integrals}. University of Pennsylvania.

\bibitem{H2023} Huang, X. (2023). Solutions of Gaussian and Gauss-like integrals in real and complex fields. \textit{Highlights in Science, Engineering and Technology, 49}. 

\bibitem{G2004} Gardiner, C. W. (2004). \textit{Handbook of stochastic methods: For physics, chemistry, and the natural sciences} (3rd ed.). Springer.

\bibitem{R2006} Rasmussen, C. E., \& Williams, C. K. I. (2006). \textit{Gaussian Processes for Machine Learning}

\bibitem{P2011} Pathria, R. K., \& Beale, P. D. (2011). \textit{Statistical mechanics} (3rd ed.). Elsevier.

\bibitem{M2016} Viola, M. (2023, January 6). How to obtain the Laurent expansion of gamma function around \( z = 0 \)? \textit{Math Stack Exchange}. Retrieved from https://math.stackexchange.com/q/2872088

\bibitem{W2002} Weisstein, E. W. (2002). Modified Bessel function of the first kind. \textit{MathWorld}. Retrieved from https://mathworld.wolfram.com/

\bibitem{AS1972} Abramowitz, M., \& Stegun, I. A. (Eds.). (1972). \textit{Handbook of mathematical functions with formulas, graphs, and mathematical tables} (9th ed.). U.S. Government Printing Office.

\bibitem{L2017} Larson, C. (2017). An appreciation of Euler's formula. \textit{Rose-Hulman Undergraduate Mathematics Journal, 18}(1), 284.

\bibitem{SG2002} Sebah, P., \& Gourdon, X. (2002). Introduction to the gamma function. \textit{American Journal of Scientific Research, 2}.

\bibitem{MA2016} Máté, A. (2016). Hyperbolic functions, 1–2.

\bibitem{Wei2002} Weisstein, E. W. (2002). Euler-Mascheroni constant. \textit{MathWorld}. Retrieved from https://mathworld.wolfram.com/

\bibitem{We2002} Weisstein, E. W. (2002). Digamma function. \textit{MathWorld}. Retrieved from https://mathworld.wolfram.com/

\bibitem{A2010} Askey, R. A., \& Roy, R. (2010). Gamma function, 135–137.

\bibitem{EG1969} Ng, E. W., \& Geller, M. (1969). A table of integrals of the error functions. \textit{Journal of Research of the National Bureau of Standards B, 73}(1), 3.





 








\end{thebibliography}
\end{document}